\newtheorem{Thm}{Theorem}
\newtheorem{Lem}{Lemma}
\newtheorem{Prop}{Proposition}
\newtheorem{Cor}{Corollary}
\begin{document}

\renewcommand{\thefootnote}{}
\footnotetext{Research partially supported by NSERC Grant RGPIN 105490-2011 and by the Minist\'erio de Ci\^encia e Tecnologia, Brazil, CNPq Proc. No. 303774/2009-6.}

\title[Local Isometric Immersions of $\eta$ p.s.s. and $k$-th Order Evolution Equations]{Local Isometric Immersions of Pseudo-spherical Surfaces and $k$-th Order Evolution Equations}
\author{Nabil Kahouadji, Niky Kamran  and Keti Tenenblat}
\date{}%\today

\begin{abstract}
We consider the class of evolution equations that  describe pseudo-spherical surfaces of the form $u_t=F(u,\partial u/\partial x,..., \partial^k u/\partial x^k)$, $k\geq 2$  classified by Chern-Tenenblat. 
This class of equations is characterized by the property that to each 
solution of a differential equation within this class, there  corresponds 
a 2-dimensional Riemannian metric of curvature -1.\linebreak  We investigate the 
following problem: given such a metric, is there a local isometric immersion   
in $\mathbb{R}^3$ such that  the coefficients of the second fundamental form of the surface depend on a jet of finite order of $u$?  
By extending our previous result for second order evolution equation to 
$k$-th order equations, we prove that there is only one type of equations that admit such an isometric immersion. We prove that the coefficients of the second fundamental forms of the local isometric immersion determined by the solutions $u$ are universal, i.e., they are independent of $u$. Moreover, we show that there exists a foliation of the domain of the parameters of the surface by straight lines with the property that the mean curvature of the surface is constant along the images of these straight lines under the isometric immersion.   

Keywords: evolution equations; pseudo-spherical surfaces; isometric immersions. \\
MSC 2010: 35L60, 37K25, 47J35, 53B10, 53B25  
\end{abstract}

\maketitle

\section{Introduction}
This paper is the third in a series~\cite{KKT1,KKT2} in which we consider the special properties of the local isometric immersions into three-dimensional Euclidean space ${\mathbb{E}}^{3}$ of the metrics of constant negative Gaussian curvature $K=-1$ associated to the solutions $u$ of evolution equations
\begin{equation}\label{evol}
\frac{\partial u}{\partial t}=F(u,\frac{\partial u}{\partial x},\ldots,\frac{\partial^{k} u}{\partial x^{k}}),
\end{equation}
describing pseudo-spherical surfaces. For reasons that will be explained below, our main interest lies in determining evolution equations (\ref{evol}) for which the components of the second fundamental form of the local isometric immersion depend on $u$ and \emph{finitely many of its derivatives only}, in other words on a jet of finite order of $u$. 

Recall following Chern and Tenenblat~\cite{ChernTenenblat} that a partial differential equation 
\begin{equation}\label{pde}
\Delta\left(t,x,u,\frac{\partial u}{\partial x},\frac{\partial u}{\partial t},\ldots,\frac{\partial^{k}u}{\partial t^{l}\partial x^{k-l}}\right)=0,
\end{equation}
belongs to the class of differential equations describing pseudo-spherical surfaces if there exist $1$-forms 
\begin{equation}\label{forms}
\omega^{i}=f_{i1}dx+f_{i2}dt,\quad 1\leq i \leq3,
\end{equation}
where the coefficients $f_{ij},\,1\leq i \leq 3,\,1\leq j\leq 2,$ are smooth functions of $t,x,u$ and finitely many derivatives of $u$ with respect to $t$ and $x$, such that the structure equations 
\begin{equation}\label{struct}
d\omega^{1}=\omega^{3} \wedge\omega^{2},\quad d\omega^{2}=\omega^{1} \wedge\omega^{3},\quad d\omega^{3}=\omega^{1} \wedge\omega^{2}\neq 0,
\end{equation}
for a metric of constant Gaussian curvature $K=-1$  hold if, and only if, $u$ is a solution of (\ref{pde}). In this case every smooth solution $u: U\subset \mathbb{R}^{2} \to \mathbb{R} $ of an equation (\ref{pde}) describing pseudo-spherical surfaces defines on $U$ a Riemannian metric 
\begin{equation}\label{metric}
ds^{2}=(\omega^{1})^{2}+(\omega^{2})^{2},
\end{equation}
of constant Gaussian curvature $K=-1$, with $\omega^{3}$ being the Levi-Civita connection $1$-form of the metric (\ref{metric}).

An important motivation for the question investigated in this paper comes from the special properties of the sine-Gordon equation
\begin{equation}\label{sG}
u_{tx}=\sin u,
\end{equation}
whose well-known integrability properties can be completely accounted for through the perspective of the general theory developed by Chern and Tenenblat. First, it is straightforward to check that the $1$-forms 
\begin{equation}\label{sGcof1}
\omega_1 = \frac{1}{\eta}\sin u \, dt, \quad \omega_2 = \eta\, dx+\frac{1}{\eta}\cos u \,dt,\quad \omega_3 = u_{x}\,dx.
\end{equation}
satisfy the structure equations (\ref{struct}) whenever $u$ is a solution of the sine-Gordon equation (\ref{sG}). The non-zero real parameter $\eta$ appearing in (\ref{sGcof1}) is directly related to the existence of a one-parameter family of B\"acklund transformation and the existence of infinitely many conservation laws for the sine-Gordon equation. It is thus a key ingredient in the solution of the sine-Gordon equation by the method of inverse scattering. More generally one may consider the general class of partial differential equations describing pseudo-spherical surfaces with the special property that one of the components $f_{ij}$ can be chosen to be a continuous parameter. Such equations are said to describe {\it{$\eta$  pseudo-spherical surfaces}}. The evolution equations (\ref{evol}) describing $\eta$ pseudo-spherical surfaces have been completely classified by Chern and Tenenblat in~\cite{ChernTenenblat}, whenever $f_{21}=\eta$, $F$ and $f_{ij}$ depend on $u$ and finitely many derivatives of $u$ with respect to $t$ and $x$. Any differential equation describing $\eta$ pseudo-spherical surfaces is the integrability condition of a linear system of the form 
\begin{equation*}\label{linear}
dv^{1}=\frac{1}{2}\big(\omega_{2}\,v^{1}+(\omega_{1}-\omega_{3})\,v^{2}\big),\quad dv^{2}=\frac{1}{2} \big((\omega_{1}+\omega_{3})\,v^{1}-\omega_{2}\,v^{2}\big),
\end{equation*}
which may be used to solve the given differential equation by the method of inverse scattering~\cite{BealsRabeloTenenblat}, with $\eta$ playing the role of the spectral parameter for the scattering problem. It is also shown in~\cite{CavalcanteTenenblat} that one can generate infinite sequences of conservation laws for the class of differential equations describing $\eta$ pseudo-spherical surfaces by making use of the structure equations~(\ref{struct}), although some of these conservation laws may end up being non-local. Important further developments of these ideas around this theme can be found in 
~\cite{CastroTenenblat}, ~\cite{CatalanoOliveira16}, ~\cite{FT},
~\cite{FoursovOlverReyes}-\cite{JorgeTenenblat},
~ \cite{KamranTenenblat}-\cite{Reyes106}. 

%~\cite{Rabelo88},~ \cite{Rabelo89},~ \cite{RabeloTenenblat90}, %~\cite{RabeloTenenblat92}, , ~\cite{Gomes},~\cite{GorkaReyes},
We should also remark at this stage that given a differential equation describing pseudo-spherical surfaces, the choice of $1$-forms satisfying the structure equations (\ref{struct}) is generally not unique. For example the $1$-forms given by
\begin{equation}\label{sGcof2}
\omega_1 = \cos \frac{u}{2}( dx+dt),\quad \omega_2 =\sin \frac{u}{2} (dx - dt),\quad \omega_3 = \frac{u_{x}}{2} dx - \frac{u_{t}}{2} dt,
 \end{equation}
which are different from the $1$-forms given in (\ref{sGcof1}), will also satisfy the structure equations (\ref{struct}) whenever $u$ is a solution of the sine-Gordon equation (\ref{sG}). 

Starting with~\cite{KKT1}, we have initiated the study of differential equations describing pseudo-spherical surfaces from an extrinsic perspective, in which we focus on the properties of the local isometric immersions of the metrics (\ref{metric}) associated to the solutions of the equations. It is indeed a classical result that any metric (\ref{metric}) of constant negative scalar curvature can be locally isometrically immersed in ${\mathbb{E}}^{3}$. For the metrics defined by solutions $u: U\subset \mathbb{R}^{2} \to \mathbb{R} $ of equations describing pseudo-spherical surfaces, it is thus natural to ask in view of the integrability properties enjoyed by this class of equations if the second fundamental form of the immersion can be expressed in a simple way in terms of the solution $u$. This turns out to be effectively the case for the sine-Gordon equation (\ref{sG}). Indeed let us first recall the components $a,b,c$ of the second fundamental form of a local isometric immersion of a pseudo-spherical surface into $\mathbb{E}^{3}$ are defined by the relations
\begin{equation*}\label{omega13}
\omega_{13} = a\,\omega_1+b\,\omega_2, \quad \omega_{23} = b\,\omega_1+c\,\omega_2,
\end{equation*}
where the $1$-forms $\omega_{13}, \omega_{23}$ satisfy the structure equations
\begin{equation*}\label{Codazzi}
d\omega_{13} = \omega_{12}\wedge\omega_{23}, \quad d\omega_{23} = \omega_{21}\wedge\omega_{13},
\end{equation*}
equivalent to the Codazzi equations, and the Gauss equation, given by
\begin{equation*}\label{gauss}
ac-b^2=-1,
\end{equation*}
for a pseudo-spherical surface. For the sine-Gordon equation, with the choice of $1$-forms $\omega_{1}, \omega_{2}$ and $\omega_{3}=\omega_{12}$ given by (\ref{sGcof2}), the $1$-forms $\omega_{13},\omega_{23}$ are easily computed to be
\begin{equation*}
\omega_{13} =\sin\frac{u}{2} (dx+dt) =  \tan \frac{u}{2}\omega_1,\quad 
\omega_{23} =-\cos\frac{u}{2} (dx - dt) = -\cot \frac{u}{2}\omega_2.
\end{equation*}
We thus observe the remarkable property that the components $a,b,c$ of the second fundamental form depend only on $u$ through some simple, closed-form expressions. It would therefore not be unreasonable to expect a similar property to hold for all equations describing pseudo-spherical surfaces, where the requirement could be relaxed by allowing the coefficients $a,b.c$ of the second fundamental form to depend on $u$ and {\emph{finitely many of its derivatives}. In ~\cite{KKT1,KKT2}, we began to investigate the class of differential equations describing pseudo-spherical surfaces from this extrinsic perspective. Thus in ~\cite{KKT1}, we proved that for second-order equations of the form 
\begin{equation*}
\frac{\partial u}{\partial t}=F(u,\frac{\partial u}{\partial x},\frac{\partial^{2} u}{\partial x^{2}}),
\end{equation*}
and 
\begin{equation*}
\frac{\partial^{2}u}{\partial x\partial t}=F(u,\frac{\partial u}{\partial x}),
\end{equation*}
describing $\eta$ pseudo-spherical surfaces, the only equations for which $a,b,c$ will depend on $u$ and finitely many derivatives of $u$ are given by the sine-Gordon equation (\ref{sG}), and evolution equations of the form 
\begin{equation}\label{secondevol}
\frac{\partial u}{\partial t}=\frac{1}{f_{11,u}}\big(f_{12,\frac{\partial u}{\partial x}}\frac{\partial^{2}u}{\partial x^{2}}+f_{12,u}\frac{\partial u}{\partial x}\mp(\beta f_{11}-\eta f_{12})\big),
\end{equation}
where $f_{11,u}\neq 0$ and $f_{12,\frac{\partial u}{\partial x}}\neq 0$, where in the latter case the components $a,b,c$ of the second fundamental form are {\emph{universal}} functions of $x,t$, independent of $u$. Results of a similar nature were obtained in \cite{CastroKamran} for third-order equations of the form
\begin{equation*}
\frac{\partial u}{\partial t}-\frac{\partial^{3}u}{\partial x^{2}\partial t}=\lambda u \frac{\partial^{3} u}{\partial x^{3}}+G(u,\frac{\partial u}{\partial x},\frac{\partial^{2}u}{\partial x^{2}}).
\end{equation*}
and in ~\cite{CatalanoOliveira17} for a class of second order evolution equations of type 
$$\frac{\partial u}{\partial t}=A(x,t,u)\frac{\partial^{2}u}{\partial x^{2}}+B(x,t,u,\frac{\partial u}{\partial x})
$$ and for $k$-th order 
evolution equations in conservation law form. 

In~\cite{KKT2}, the same question was considered for the evolution equations (\ref{evol}) of order $k\geq 3$ classified in ~\cite{ChernTenenblat}, 
 where we proved as a first result that the $a,b,c$ are again necessarily {\emph{universal functions}} of $x,t$, independent of $u$. 
Our purpose in the present paper is to complete this analysis by determining the analogue of the form (\ref{secondevol}) for $k$-th order evolution equations (\ref{evol}). We now state our main result:

\begin{Thm}\label{MainRes} 
Except for $k$-th order evolution equations  of the form 
\begin{equation}\label{EqException}
\dfrac{\partial u}{\partial t} = \frac{1}{f_{11,u}}\big(\sum_{i=0}^{k-1} f_{12,\partial^i u/\partial x^i} \cdot \dfrac{\partial^{i+1} u}{\partial x^{i+1}} \mp (\beta f_{11} - \eta f_{12})\big), \qquad k\geq 2, 
\end{equation}
where  $f_{11, u} \neq 0$ and $f_{12,\frac{\partial^{k-1} u}{\partial x^{k-1}}}\neq 0$, there exists no $k$-th order  evolution equation of order $k\geq 2$ describing $\eta$ pseudo-spherical surfaces, with $1$-forms  (\ref{forms}) given as in \cite{ChernTenenblat}, with the property that the coefficients of the second fundamental forms of the local isometric immersions of the surfaces associated to the solutions $u$ of the equation depend on a jet of finite order of $u$.  Moreover, the coefficients of the second fundamental forms of the local isometric immersions of the surfaces determined by the solutions $u$ of (\ref{EqException}) are universal, i.e., they are universal functions of  $\eta x+\beta t$, independent of $u$. 
 \end{Thm}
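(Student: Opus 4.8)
The plan is to follow the strategy that worked in \cite{KKT1,KKT2}, but now push it through for arbitrary order $k\geq 2$ in a uniform way. The starting point is the classification of Chern and Tenenblat: any $k$-th order evolution equation describing $\eta$ pseudo-spherical surfaces has associated $1$-forms $\omega^i = f_{i1}\,dx + f_{i2}\,dt$ with $f_{21}=\eta$, and the $f_{ij}$ depend on $u$ and finitely many $x$-derivatives of $u$ but not on $t$-derivatives; substituting $\omega^3=\omega_{12}$ into the structure equations (\ref{struct}) forces $F$ to have the stated dependence on $f_{11}$, $f_{12}$ and their derivatives. The key structural input from \cite{KKT2} is that the second fundamental form coefficients $a,b,c$ are already known to be universal functions of $x,t$ alone (independent of $u$), so the remaining task is: assuming $a,b,c$ depend at most on a finite jet of $u$, show they are in fact functions of $x,t$, and simultaneously extract the constraint (\ref{EqException}) on $F$ — unless we are already in that exceptional case, in which case no further constraint arises and the immersion exists with universal second fundamental form.

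The mechanics: I would write out the Codazzi system $d\omega_{13}=\omega_{12}\wedge\omega_{23}$, $d\omega_{23}=\omega_{21}\wedge\omega_{13}$ together with the Gauss equation $ac-b^2=-1$, expand $\omega_{13}=a\omega^1+b\omega^2$, $\omega_{23}=b\omega^1+c\omega^2$, and differentiate using (\ref{struct}). This produces a pair of first-order PDEs for $a,b,c$ in the variables $x,t$ whose coefficients involve $f_{11},f_{12},f_{31}=\omega^3_x$, their $u$-dependence, and $F$. Since $a,b,c$ are assumed to depend on $u,u_x,\dots,\partial^r u/\partial x^r$ for some finite $r$, I would apply the chain rule: each total derivative $d/dt$ acting on $a,b,c$ introduces $F$ and its $x$-derivatives, hence brings in $\partial^{r+k}u/\partial x^{r+k}$-type terms, while $d/dx$ raises the order by one. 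Collecting coefficients of the highest-order jet variables that appear — the ones on which none of $f_{ij}$, $F$, or the ``known'' data depend — yields an overdetermined linear system. The standard move is: the top-order coefficients must vanish, which forces $\partial a/\partial(\partial^r u/\partial x^r)=0$, and then descending induction on the jet order kills all the $u$-derivative dependence, leaving $a,b,c$ functions of $x,t$ only; feeding this back into the Codazzi equations then pins down $F$ to the form (\ref{EqException}), and a further integration shows $a,b,c$ depend on $x,t$ only through $\eta x+\beta t$.

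For the exceptional family (\ref{EqException}) itself I would argue directly: substitute this specific $F$ and the Chern--Tenenblat $1$-forms into the Gauss--Codazzi system and exhibit an explicit universal solution $a,b,c$ depending only on $\xi:=\eta x+\beta t$ — this should amount to solving an ODE in $\xi$ (the analogue of the computation after (\ref{secondevol}) for $k=2$), together with the Gauss constraint $ac-b^2=-1$, and one checks the resulting immersion is genuine, i.e.\ $d\omega_{12}\ne 0$. The line $\eta x+\beta t=\text{const}$ gives the claimed foliation, and since the mean curvature is $(a+c)/2$ it is constant along those lines because $a,c$ are functions of $\xi$ alone. The main obstacle, as in the $k$-th order step of \cite{KKT2}, is the bookkeeping of the descending induction: one must track exactly which jet variables of $u$ enter each coefficient after repeated total differentiation — in particular verifying that no ``accidental'' cancellation occurs among the nominally top-order terms coming from $f_{12,\partial^{k-1}u/\partial x^{k-1}}\ne 0$ and $f_{11,u}\ne 0$ — and organizing the argument so that the $k=2$ analysis of \cite{KKT1} is recovered as a special case rather than treated separately.
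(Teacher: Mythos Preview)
Your proposal has a genuine gap: you have misidentified what the remaining work is. You correctly note that \cite{KKT2} already shows that if $a,b,c$ depend on a finite jet of $u$ then they depend on $x,t$ alone --- but then your ``mechanics'' paragraph proceeds to redo exactly that descending-induction argument on the jet order of $a,b,c$. That is not what is left to prove here. What remains is this: with $a,b,c$ now functions of $(x,t)$ only, the Codazzi equations (\ref{Eq1-2n})--(\ref{Eq2-2n}) still have coefficients $f_{11},f_{12},f_{22},f_{31},f_{32}$ depending on $z_0,\dots,z_{k-1}$, and one must show that this overdetermined system (together with Gauss) is \emph{inconsistent} unless the $f_{ij}$ fall into the exceptional Group~I pattern $f_{31}=\pm f_{11}$, $f_{32}=\pm f_{12}$, $f_{22}=\beta$.

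The paper does not do this by ``pinning down $F$'' from the Codazzi equations. Instead it invokes the Chern--Tenenblat classification, which partitions all such equations into five groups I--V according to the vanishing/non-vanishing of $H=f_{11}f_{11,z_0}-f_{31}f_{31,z_0}$ and $L=f_{11}f_{31,z_0}-f_{31}f_{11,z_0}$ (see Remark~1). The proof is then five separate propositions: in Group~I one integrates (\ref{Eq1-2n})--(\ref{Eq2-2n}) explicitly and obtains $a,b,c$ as functions of $\eta x+\beta t$; in each of Groups~II--V one differentiates (\ref{Eq1-2n})--(\ref{Eq2-2n}) with respect to the highest $z_i$ present (typically $z_{k-1}$, then $z_{k-2}$ or $z_0$), uses the structure equations (\ref{SEq1n})--(\ref{SEq3n}) to control quantities like $f_{32,z_{k-1}}-\lambda f_{12,z_{k-1}}$, and arrives at a $2\times 2$ linear system in $(a-c,2b)$ with nonzero determinant, forcing $a-c=b=0$ and contradicting Gauss. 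Your proposal contains no hint of this case split or of the mechanism (differentiating in $z_i$ the Codazzi equations \emph{after} $a,b,c$ have been reduced to functions of $x,t$) by which inconsistency is obtained in the non-exceptional groups; without it the argument does not close.
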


This theorem provides further evidence the special place that the sine-Gordon equation appears to occupy amongst all integrable equations from the perspective provided by the theory of differential equations describing $\eta$ pseudo-spherical surfaces. 

We point out that the universal coefficients of the second fundamental forms of the isometric immersions mentioned in  Theorem \ref{MainRes} are explicitly given in Proposition \ref{Prop1}. We now prove a consequence of our main result. 

\begin{Cor}  For each solution $u$ of an equation of type (\ref{EqException}), there exists a foliation of the domain of $u$ by straight lines with the property that when the metric of constant negative Gaussian curvature $K=-1$ associated to $u$ through~(\ref{metric}) is locally isometrically immersed as a surface $S \subset {\mathbb{E}}^{3}$, the mean curvature of $S$ is constant along the curves defined by the images under the immersion of the lines of this foliation.
\end{Cor}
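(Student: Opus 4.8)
The plan is to exploit the explicit description, furnished by Theorem~\ref{MainRes} and Proposition~\ref{Prop1}, of the second fundamental form of the immersion of the metric associated to a solution $u$ of~(\ref{EqException}). Since the coefficients $a,b,c$ are universal functions of the single variable $\xi:=\eta x+\beta t$, I would first compute the mean curvature $H=\tfrac12(a+c)$ (with respect to a choice of unit normal) and observe that it too is a function of $\xi$ alone, say $H=H(\xi)$. Consequently $H$ is constant on each level set $\{\eta x+\beta t=\text{const}\}$, and these level sets form a foliation of the domain $U\subset\mathbb R^2$ of $u$ by parallel straight lines (the direction vector being $(\beta,-\eta)$, which is nonzero since $\eta\neq 0$). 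This already gives a foliation by straight lines along which $H$ is constant \emph{as a function on the parameter domain}.

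The substantive point is to pass from ``constant along the lines in the parameter domain'' to ``constant along the images of these lines under the isometric immersion,'' i.e.\ constant along the corresponding curves of the surface $S\subset\mathbb E^3$. This is immediate once one recalls that the mean curvature of $S$ at a point is, by definition, the value of the (pulled-back) function $H$ computed from the first and second fundamental forms at the corresponding point of the parameter domain; the immersion $X\colon U\to\mathbb E^3$ is a diffeomorphism onto its image locally, and $H\circ X^{-1}$ is the mean curvature of $S$ as a function on $S$. Hence if $\gamma$ is one of the straight lines of the foliation, $X\circ\gamma$ is a curve on $S$ along which the mean curvature equals $H(\xi_0)$, a constant. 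So the two formulations are literally the same statement transported through the local diffeomorphism $X$.

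The only mild subtlety, and the step I would be most careful about, is making sure the foliation is genuinely by \emph{straight lines} and not merely by smooth curves: this uses that the universal dependence is on the specific affine combination $\eta x+\beta t$ (as asserted in Theorem~\ref{MainRes}), whose level sets in the $(x,t)$-plane are affine lines. I would therefore state explicitly that the leaves of the foliation are $\ell_{c}=\{(x,t)\in U:\eta x+\beta t=c\}$, note that each $\ell_c$ is an open segment of a straight line, and then invoke Proposition~\ref{Prop1} to conclude $H|_{\ell_c}\equiv H(c)$. Finally, I would remark that the choice of unit normal (and hence the sign of $H$) is irrelevant for the assertion of constancy, so no orientation hypothesis is needed. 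Assembling these observations yields the Corollary.
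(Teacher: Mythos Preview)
Your proposal is correct and follows essentially the same argument as the paper: both use Theorem~\ref{MainRes} (and Proposition~\ref{Prop1}) to conclude that $a,b,c$ depend only on $\eta x+\beta t$, take the foliation by the level lines $\eta x+\beta t=\mathrm{const}$, and observe that the mean curvature, being determined by $a,b,c$, is therefore constant along the image of each such line. Your write-up is somewhat more explicit about the passage from the parameter domain to the surface and about why the leaves are genuine straight lines, but the underlying reasoning is identical.
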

\begin{proof}
For each solution $u$ of any equation of type (\ref{EqException}), the associated 1-forms (\ref{forms}) define a metric with Gaussian  curvature 
$K=-1$. It follows from Theorem \ref{MainRes} that a local isometric immersion of such a metric into $\mathbb{R}^3$ is determined by 
the coefficients of the second fundamental form, which are functions of $\eta x+\beta t$. Now consider $(x,t)\in \mathbb{R}^2$ such that the 
straight line $\eta x+\beta t=\delta$, $\delta\in \mathbb{R}$ is contained in the domain of definition of the immersion. The domain is foliated by such straight lines.  For each  $\delta$, the image of the straight line is a curve in the surface. Along this curve 
the coefficients of the second fundamental form are constants determined by 
$\delta$. Since the mean curvature $H$ of the surface is given by the trace 
of the second fundamental form, it follows that $H$ is constant along any such curve. 
\end{proof}

Before proving Theorem \ref{MainRes}, we observe that a similar result on the mean curvature of the immersed surface also holds for  the main results obtained in \cite{KKT1} and \cite{CastroKamran}. In fact,  
the  arguments are the same as those used  
in the proof of the Corollary above.

\section{Proof of Theorem \ref{MainRes}}

The proof of Theorem \ref{MainRes} is based on an order analysis of the Codazzi and Gauss equations that govern the local isometric immersions of pseudo-spherical surfaces in $\mathbb{E}^{3}$, considering in turn each branch of the Chern-Tenenblat classification of $k$-th order evolution equations describing $\eta$ pseudo-spherical surfaces~\cite{ChernTenenblat}. In order to carry out this analysis, one should first express these equations in terms of the components $f_{ij}$ of the $1$-forms that appear in the formulation of the problem.  These conditions have already been worked out in \cite{KKT1,KKT2}; the Codazzi equations read

\begin{eqnarray}\label{Eq1}
 f_{11} D_{t}a + f_{21}D_{t}b - f_{12}D_{x}a - f_{22}D_{x}b - 2b \Delta_{13}+ (a-c)\Delta_{23} = 0,\\\label{Eq2}
 f_{11} D_{t}b + f_{21}D_{t}c - f_{12}D_{x}b - f_{22}D_{x}c +(a-c) \Delta_{13}+ 2b\Delta_{23} = 0,
 \end{eqnarray}
 where 
$$\Delta_{12} := f_{11}f_{22} - f_{21} f_{12}; \quad \Delta_{13} := f_{11}f_{32} - f_{31} f_{12}; \quad \Delta_{23} := f_{21}f_{32} - f_{31} f_{22},$$
and where the operators $D_{t}$ and $D_{x}$ are total derivative operators, while the Gauss equation is given by 
\begin{equation}\label{Gauss}
ac-b^2=-1.
\end{equation}

In the case of a differential equation describing $\eta$ pseudo-spherical surfaces (with $f_{21}=\eta$), the structure equations (\ref{struct}) are equivalent to 
\begin{eqnarray*} \label{SEq1}
D_t f_{11} - D_x f_{12} = \Delta_{23},\\\label{SEq2}
D_x f_{22} = \Delta_{13},\\\label{SEq3}
D_t f_{31} - D_x f_{32} = -\Delta_{12},
\end{eqnarray*}
where $D_t$ and $D_x$ are the total derivative operators and 
\begin{equation*}
\Delta_{12} := f_{11}f_{22} -\eta f_{12}\neq 0, \quad  \Delta_{13} := f_{11}f_{32} - f_{31}f_{12}, \quad  \Delta_{23} = \eta f_{32} - f_{31}f_{22}.
\end{equation*}
We shall use the notation 
\begin{equation*}
z_{i}=u_{x^{i}}=\frac{\partial^{i}u}{\partial x^{i}},\quad 0\leq i \leq k,
\end{equation*}
introduced in \cite{ChernTenenblat} to denote the derivatives of $u$ with respect to $x$ and write the evolution equation (\ref{evol}) as 
\begin{equation*}\label{evolkz}
 z_{0,t} = F(z_0, z_1, \dots, z_k).
\end{equation*}
We will thus think of $(t,x,z_{0},\dots,z_{k})$ as local coordinates on an open set of the submanifold of the jet space $J^{k}({\mathbb R}^{2},{\mathbb R})$ defined by the differential equation (\ref{evol}). We will use the following lemma from~\cite{ChernTenenblat} which expresses the necessary and sufficient conditions for the structure equations (\ref{struct}) to hold:

\begin{Lem} {\bf \cite{ChernTenenblat}}  Let (\ref{evol}) be a $k$-th order evolution equation describing $\eta$ pseudo-spherical surfaces, with associated $1$-forms (\ref{forms}) such that $f_{21}=\eta$. Then necessary and sufficient conditions for the structure equations (\ref{struct}) to hold are given by
\begin{eqnarray} \label{std1}
f_{11,z_k} = \cdots = f_{11,z_1} = 0, \quad
f_{21} = \eta, \quad 
f_{31,z_k} = \cdots = f_{31,z_1} = 0,\\ \label{std2}
f_{12,z_{k}} = 0, \quad 
f_{22, z_k} = f_{22,z_{k-1}} = 0, \quad 
f_{32,z_{k}} = 0,  
\end{eqnarray}
\begin{equation}\label{std7}
f_{11,z_0}^2 + f_{31,z_0}^2 \neq 0,  
\end{equation}
\begin{eqnarray}\label{SEq1n}
f_{11,z_0} F = \sum_{i=0}^{k-1} f_{12,z_i}z_{i+1} + \eta f_{32} - f_{31} f_{22},\\\label{SEq2n}
\sum_{i=0}^{k-2} f_{22,z_i}z_{i+1} = f_{11}f_{32} - f_{31}f_{12}, 
\\\label{SEq3n}
f_{31,z_0} F = \sum_{i=0}^{k-1} f_{32,z_i}z_{i+1} + \eta f_{12} - f_{11} f_{22}, 
\end{eqnarray}
and 
\begin{equation}\label{Delta12}
f_{11}f_{22}-\eta f_{12}\neq 0.
\end{equation}
\end{Lem}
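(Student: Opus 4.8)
The plan is to translate the three structure equations (\ref{struct}) into scalar partial differential equations on the prolongation of the equation (\ref{evol}), and then to run an order-by-order analysis in the jet variables $z_0,z_1,\dots$, matching the coefficients of the highest-order derivatives that occur. Concretely, I would work on the infinite prolongation of (\ref{evol}), on which $(t,x,z_0,z_1,\dots)$ are independent coordinates and the $t$-derivatives are recovered through $D_tz_j=D_x^jF$. Writing $\omega^i=f_{i1}\,dx+f_{i2}\,dt$ and using $df_{ij}=(D_xf_{ij})\,dx+(D_tf_{ij})\,dt$, one gets $d\omega^i=(D_xf_{i2}-D_tf_{i1})\,dx\wedge dt$. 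Expanding $\omega^3\wedge\omega^2$, $\omega^1\wedge\omega^3$, $\omega^1\wedge\omega^2$ and using $f_{21}=\eta$ (so $D_xf_{21}=D_tf_{21}=0$), the structure equations (\ref{struct}) become the three scalar identities $D_tf_{11}-D_xf_{12}=\eta f_{32}-f_{31}f_{22}$, $D_xf_{22}=f_{11}f_{32}-f_{31}f_{12}$, and $D_tf_{31}-D_xf_{32}=\eta f_{12}-f_{11}f_{22}$, together with the nondegeneracy $f_{11}f_{22}-\eta f_{12}\neq0$ coming from $d\omega^3\neq0$, which is precisely (\ref{Delta12}).

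The heart of the argument is the order analysis of these three identities. Writing $\mathrm{ord}(g)$ for the largest index $j$ with $g_{z_j}\neq 0$, the two facts I would use repeatedly are that $D_x$ raises the order of a jet function by exactly $1$ (leading term $z_{m+1}g_{z_m}$), whereas $D_t$ raises it by exactly $k$, with leading term $F_{z_k}\,g_{z_m}\,z_{m+k}$; here $F_{z_k}\neq 0$ because (\ref{evol}) is genuinely of order $k$, and any explicit $x,t$-dependence of the $f_{ij}$ enters only through lower-order terms and so does not affect the leading-order matching. Since $k\geq 2$, the operator $D_t$ strictly dominates $D_x$ in order. Setting $N=\max(\mathrm{ord}\,f_{11},\mathrm{ord}\,f_{12},\mathrm{ord}\,f_{22},\mathrm{ord}\,f_{31},\mathrm{ord}\,f_{32})$, I would first read off from the first and third identities that $\mathrm{ord}\,f_{11}$ and $\mathrm{ord}\,f_{31}$ are each at most $N+1-k$, since the $z_{N+k}$-term produced by $D_t$ would otherwise be uncancellable (the right-hand sides introduce no new jet variable, and $D_xf_{12},D_xf_{32}$ only reach order $N+1<N+k$), and from the second identity that $\mathrm{ord}\,f_{22}\leq N-1$. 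These bounds force $N=\max(\mathrm{ord}\,f_{12},\mathrm{ord}\,f_{32})$. The crux is then to show $N\leq k-1$: assuming $\mathrm{ord}\,f_{12}=N\geq k$ (the case of $f_{32}$ being symmetric via the third identity), the $z_{N+1}$-term of $D_xf_{12}$ in the first identity can only be matched by the $D_tf_{11}$-term, forcing $\mathrm{ord}\,f_{11}=N+1-k\geq1$ and the leading relation $f_{12,z_N}=f_{11,z_{N+1-k}}F_{z_k}$; feeding this back into the second and third identities and comparing leading coefficients yields a contradiction. I expect this coupled case analysis — disentangling the mutual constraints that the three identities impose on the five orders — to be the main obstacle, and the place where the hypothesis $k\geq 2$ (equivalently $F_{z_k}\neq0$ with $D_t$ jumping by more than $D_x$) is essential.

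Once $N\leq k-1$ is established, the bounds above give $\mathrm{ord}\,f_{11}\leq 0$ and $\mathrm{ord}\,f_{31}\leq 0$, i.e. $f_{11}$ and $f_{31}$ depend only on $z_0$, which is (\ref{std1}); that $\mathrm{ord}\,f_{12},\mathrm{ord}\,f_{32}\leq k-1$, i.e. $f_{12,z_k}=f_{32,z_k}=0$; and that $\mathrm{ord}\,f_{22}\leq k-2$, i.e. $f_{22,z_k}=f_{22,z_{k-1}}=0$; these are (\ref{std2}). Substituting these into the three scalar identities, the $D_t$-terms collapse to $D_tf_{11}=f_{11,z_0}F$ and $D_tf_{31}=f_{31,z_0}F$, while the $D_x$-terms become the finite sums in the statement, yielding exactly (\ref{SEq1n}), (\ref{SEq2n}), (\ref{SEq3n}). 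Finally, (\ref{std7}) follows because $F$ must genuinely appear: if $f_{11,z_0}=f_{31,z_0}=0$, then by (\ref{std1}) the functions $f_{11},f_{31}$ are constant and the left-hand sides of (\ref{SEq1n}) and (\ref{SEq3n}) vanish, so $F$ is left undetermined and (\ref{evol}) is not a genuine $k$-th order evolution equation; hence $f_{11,z_0}^2+f_{31,z_0}^2\neq0$. For sufficiency I would reverse these steps: assuming (\ref{std1})–(\ref{std2}), (\ref{SEq1n})–(\ref{SEq3n}) and (\ref{Delta12}), the three scalar identities hold by direct substitution, so the structure equations (\ref{struct}) hold as well, which is a routine verification.
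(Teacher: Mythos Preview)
The paper does not supply its own proof of this lemma: it is quoted from Chern--Tenenblat \cite{ChernTenenblat} and used as a black box, so there is no in-paper argument to compare your proposal against. Your outline follows the natural route and the reductions you write down---translating (\ref{struct}) into the three scalar identities (which the paper records verbatim just before the lemma), and then running an order analysis using that $D_x$ raises jet order by $1$ while $D_t$ raises it by $k$ with leading coefficient $F_{z_k}\neq 0$---are exactly the right mechanism.

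Two comments. First, note that in the Chern--Tenenblat framework the paper adopts, the $f_{ij}$ are taken \emph{a priori} as functions on $J^k$ (the paper explicitly works in coordinates $(t,x,z_0,\dots,z_k)$), so your task is not to bound an unknown $N$ from above but only to establish the finer restrictions (\ref{std1})--(\ref{std2}) starting from $\mathrm{ord}\,f_{ij}\le k$. This streamlines the bookkeeping. Second, the step you flag as the ``crux'' is genuinely incomplete in your write-up, and it is a bit more delicate than you suggest. After the top-order sweep you obtain $f_{11,z_j}=f_{31,z_j}=0$ for $j\ge 2$ and $f_{22,z_k}=0$; at the $z_{k+1}$-level, identities one and three give only the couplings $f_{12,z_k}=f_{11,z_1}F_{z_k}$ and $f_{32,z_k}=f_{31,z_1}F_{z_k}$, which do \emph{not} by themselves force $f_{11,z_1}=f_{31,z_1}=0$. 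Closing this requires descending one further level, combining with the $z_k$-derivative of the second identity (which yields $f_{22,z_{k-1}}=(f_{11}f_{31,z_1}-f_{31}f_{11,z_1})F_{z_k}$), and invoking the ``if and only if'' clause in the definition of an equation describing pseudo-spherical surfaces: if $f_{11}$ or $f_{31}$ retained $z_1$-dependence, the structure equations would involve $z_{1,t}$ as well as $z_{0,t}$ and could no longer be equivalent to the single evolution equation $z_{0,t}=F$. You should write this out explicitly rather than assert that ``feeding back yields a contradiction''. Once that is done, (\ref{SEq1n})--(\ref{SEq3n}) and (\ref{Delta12}) fall out exactly as you say, and your argument for (\ref{std7}) is correct.
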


\vspace{.1in}

As stated in the Introduction, Theorem \ref{MainRes} states that the $k$-th order evolution equation of type (\ref{evol}) describing $\eta$ pseudo-spherical surfaces are divided into two categories when viewed from the perspective of the local isometric immersions of pseudo-spherical metrics defined by their solutions: either the coefficients of the second fundamental forms are universal functions of $x$ and $t$, independent $u$, or the coefficients of the second fundamental form depend of a jet of infinite order of $u$. In order to prove our theorem,  we shall make use of the classification results for $k$-th order evolution equations describing $\eta$ pseudo-spherical surfaces given in Theorems 2.2, 2.3, 2.4 and 2.5 in ~\cite{ChernTenenblat}. These theorems considered five groups of equations, 
summarized and reorganized below,   
according to the properties of the following functions first introduced 
in ~\cite{ChernTenenblat}
\begin{eqnarray*}\label{LHPM}
\begin{array}{llllll}H &=& f_{11}f_{11,z_{0}} - f_{31}f_{31,z_{0}},& L &=& f_{11}f_{31,z_{0}} - f_{31}f_{11,z_{0}}, %\\P&=&f_{11,z_{0}}f_{31,z_{0}z_{0}} -  f_{31,z_{0}}f_{11,z_{0}z_{0}},  & %M&=&f_{31,z_{0}}^2 - f_{11,z_{0}}^2.
\end{array}
\end{eqnarray*}
{\bf Remark 1.}
\quad A $k$-th order evolution equation $z_{0,t}=F(z_0,...,z_k)$   describing  $\eta$ pseudo-spherical surfaces with associated 1-forms 
$\omega_i=f_{i1}dx+f_{i2}dt, 1\leq i\leq 3$, where $f_{ij}, 1\leq j\leq 2$ 
satisfy (\ref{std1})-(\ref{Delta12}), is in one of the following five groups:
\begin{itemize}
\item [{\bf I:}] $L=0$ with $f_{31}=\lambda f_{11}\neq 0$, $\lambda^2-1=0$. In this case, $f_{22}$ does not depend on $z_i$, $0\leq i\leq k$ and $f_{32}=\lambda f_{12}$.
\item [{\bf II:}] $L=0$ with $f_{31}=\lambda f_{11}\neq 0$, $\lambda^2-1\neq 0$. In this case, $f_{22,z_{k-2}}=0$.
\item [{\bf III:}] $L=0$ and $H\neq 0$, i.e., $f_{11}=0$ and $f_{31,z_0}\neq 0$ or 
$f_{31}=0$ and $f_{11,z_0}\neq 0$.
\item [{\bf IV:}] $L\neq 0$ and $H=0$, i.e., $f_{31}^2-f_{11}^2=C\neq 0$.
\item [{\bf V:}]  $HL\neq 0$.
\end{itemize}
We observe that equations of Groups I and II were treated in Theorem 2.4 (a) and (b) in 
~\cite{ChernTenenblat} respectively and  equations of Groups III, IV and V were treated in Theorems 2.3, 2.5 and 2.2 in ~\cite{ChernTenenblat} respectively.

If the coefficients $a, b, c$ of the 1-forms $\omega_{13}$ and $\omega_{23}$ depend of a jet of finite order of $u=z_{0}$, that is  $a, b, c$ are functions of $x, t, z_0, \dots, z_l$ for some finite $l$, then (\ref{Eq1}) and (\ref{Eq2}) become
\begin{eqnarray*}\label{Eq1D}
f_{11}a_t + \eta b_t - f_{12}a_x - f_{22} b_x - 2b \Delta_{13} + (a-c)\Delta_{23}  - \sum_{i=0}^l (f_{12}a_{z_i} + f_{22}b_{z_i})z_{i+1} \\
+\sum_{i=0}^l (f_{11}a_{z_i} + \eta b_{z_i})z_{i,t} = 0,
\end{eqnarray*}
and
\begin{eqnarray*}\label{Eq2D}
f_{11}b_t + \eta c_t - f_{12}b_x - f_{22} c_x + (a-c) \Delta_{13} + 2b\Delta_{23}  - \sum_{i=0}^l (f_{12}b_{z_i} + f_{22}c_{z_i})z_{i+1} \\
+\sum_{i=0}^l  (f_{11}b_{z_i} + \eta c_{z_i})z_{i,t} = 0.
\end{eqnarray*}

In~\cite{KKT2}, we showed that if the coefficients of the second fundamental form $a, b$ and $c$ depend on a jet of finite order of $u=z_{0}$,  then $a, b$ and $c$ are universal, that is $l=0$ and $a, b$ and $c$ depend at most on $x$ and $t$ only. Therefore, equations (\ref{Eq1}) and (\ref{Eq2}) become
\begin{eqnarray}\label{Eq1-2n}
f_{11}a_t + \eta b_t - f_{12}a_x - f_{22} b_x - 2b (f_{11}f_{32}-f_{31}f_{12}) + (a-c)(\eta f_{32}-f_{31}f_{22})  = 0,\\\label{Eq2-2n}
f_{11}b_t + \eta c_t - f_{12}b_x - f_{22} c_x + (a-c)(f_{11}f_{32}-f_{31}f_{12})  + 2b  (\eta f_{32}-f_{31}f_{22}) = 0,  
\end{eqnarray}
where $a_{x}, b_{x}, c_{x}$ are the partial derivatives of $a, b, c$ with respect to $x$, and $a_{t}, b_{t}, c_{t}$ are the partial derivatives of $a, b, c$ with respect to $t$. 

We now consider in turn all the cases listed in Remark 1 and state the conclusion for each case in the form of a proposition. The first proposition pertains to evolution equations in item I  of Remark 1.

\begin{Prop} \label{Prop1} Let 
\begin{equation}\label{EvolEx1}
\dfrac{\partial u}{\partial t} = \frac{1}{f_{11,u}}\big(\sum_{i=0}^{k-1} f_{12,\partial^i u/\partial x^i} \cdot \dfrac{\partial^{i+1} u}{\partial x^{i+1}} \mp ( \beta f_{11} - \eta f_{12})\big), 
\end{equation}
where $f_{11, u} \neq 0$ and $f_{12,\frac{\partial^{k-1} u}{\partial x^{k-1}}}\neq 0$, be a $k$-th order evolution equation, $k\geq 2$, which describes $\eta$ pseudo-spherical surfaces with 1-forms $\omega_i$ as in \cite{ChernTenenblat}. There exists a local isometric immersion in $\mathbb{R}^3$  of a pseudo-spherical surface, determined by a 
solution $u$, for which the coefficients $a,b,c$ of the second fundamental form depend on a jet of finite order of $u$ if, and only if, the coefficients are  universal and are given by 
\begin{eqnarray}\label{auniversal}
a&=& \sqrt{le^{\pm2(\eta x + \beta t )} - \gamma^2e^{\pm4(\eta x + \beta t )} - 1}, \\\label{buniversal}
b&=& \gamma e^{\pm2(\eta x + \beta t)}, \\\label{cuniversal}
c&=& \dfrac{\gamma^2e^{\pm4(\eta x + \beta t)} - 1}{ \sqrt{le^{\pm2(\eta x + \beta t )} - \gamma^2e^{\pm 4(\eta x + \beta t )} - 1}}, 
\end{eqnarray}
$l,\gamma \in \mathbb{R}, \, l>0$   and   $l^2 >4\gamma^2$.  The 1-forms are defined on a strip of $\mathbb{R}$ where 
\begin{equation}\label{strip}
\log\sqrt{\dfrac{l-\sqrt{l^2-4\gamma^2}}{2\gamma^2}}< \pm( \eta x+\beta t)< \log\sqrt{\dfrac{l+\sqrt{l^2-4\gamma^2}}{2\gamma^2}}.   
 \end{equation}
 Moreover, the constants $l$ and $\gamma$ have to be chosen so that the strip intersects the domain of the solution of the evolution equation. 
\end{Prop}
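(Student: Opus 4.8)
The plan is to analyze the constraints on the second fundamental form coming from Group I of Remark 1, where we have $f_{31}=\lambda f_{11}$ with $\lambda^2=1$, $f_{32}=\lambda f_{12}$, and $f_{22}$ independent of all $z_i$. First I would substitute these relations into the simplified Codazzi equations (\ref{Eq1-2n})--(\ref{Eq2-2n}) and into the structure equations (\ref{SEq1n})--(\ref{SEq3n}). With $f_{31}=\lambda f_{11}$ and $f_{32}=\lambda f_{12}$, the quantities $\Delta_{13}=f_{11}f_{32}-f_{31}f_{12}=0$, so (\ref{SEq2n}) forces $f_{22}$ to be constant (since its $z_i$-derivatives already vanish and its $x$-derivative equals $\Delta_{13}=0$ up to total derivative bookkeeping — one must check $f_{22}$ can depend only on $t$, and then a reparametrization/normalization argument shows it may be taken constant, say $f_{22}=\beta/\eta$ or similar, producing the combination $\eta x+\beta t$). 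Meanwhile $\Delta_{23}=\eta f_{32}-f_{31}f_{22}=\lambda(\eta f_{12}-f_{11}f_{22})=-\lambda\Delta_{12}$ where $\Delta_{12}=f_{11}f_{22}-\eta f_{12}\neq 0$. The structure equations (\ref{SEq1n}) and (\ref{SEq3n}) then become (up to sign) the same equation $f_{11,z_0}F=\sum f_{12,z_i}z_{i+1}-\Delta_{12}$, which upon solving for $F$ and writing $\beta f_{11}-\eta f_{12}=$ (appropriate multiple of $\Delta_{12}$) yields exactly the form (\ref{EqException})/(\ref{EvolEx1}); conversely any equation of that form lies in Group I with these data. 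This establishes the "if and only if" linkage between the equation form and the branch.

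Next, with $\Delta_{13}=0$ and $\Delta_{23}=-\lambda\Delta_{12}$, equations (\ref{Eq1-2n})--(\ref{Eq2-2n}) reduce to a linear first-order PDE system for $a,b,c$ in $x,t$ with coefficients involving $f_{11},f_{12},f_{22}$ and $\Delta_{12}$. The key observation I would exploit is that the $z_i$-dependence of $f_{11}$ (only through $z_0$) and $f_{12}$ (through $z_0,\dots,z_{k-1}$) must drop out because $a,b,c$ depend only on $x,t$; differentiating the reduced Codazzi equations with respect to the $z_i$ and using $f_{11,z_i}=0$ for $i\geq 1$ while $f_{12,z_{k-1}}\neq 0$ will force the coefficient multiplying the surviving $z$-dependence to vanish, giving algebraic relations among $a,b,c,a_x,b_x$ etc. Combined with the Gauss equation $ac-b^2=-1$, this should collapse the system to ODEs in the single variable $s:=\eta x+\beta t$ (this is where the foliation by the lines $s=\text{const}$ emerges). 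Solving: from the $b$-equation one gets $b'=\pm 2\eta b$ type behavior yielding $b=\gamma e^{\pm 2s}$; feeding this into the remaining equation and the Gauss constraint gives the quadratic relation under the square root, producing (\ref{auniversal})--(\ref{cuniversal}), with the positivity of the radicand dictating the strip (\ref{strip}) and the discriminant condition $l^2>4\gamma^2$.

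For the converse direction, I would simply verify by direct substitution that the explicit $a,b,c$ in (\ref{auniversal})--(\ref{cuniversal}) satisfy the Gauss equation $ac-b^2=-1$ identically and satisfy the reduced Codazzi equations (\ref{Eq1-2n})--(\ref{Eq2-2n}) for the Group I data, and that the radicand is positive precisely on the strip (\ref{strip}); the requirement that $l,\gamma$ be chosen so this strip meets the domain of $u$ is then automatic. The final clause about the $1$-forms being defined on a strip follows from the same positivity analysis applied to $\omega_{13},\omega_{23}=a\omega_1+b\omega_2$, etc.

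The main obstacle I anticipate is the reduction step: showing rigorously that the reduced Codazzi system, together with Gauss, forces $a,b,c$ to be functions of $\eta x+\beta t$ alone and not of $x$ and $t$ independently. This requires carefully tracking how $f_{11},f_{12},f_{22},f_{11,z_0},f_{12,z_i}$ enter, using the classification constraints (in particular $f_{12,z_{k-1}}\neq 0$ and the ODE for $f_{22}$) to separate variables, and ruling out spurious branches (e.g.\ where $b\equiv 0$ or where $f_{11,z_0}=0$, which would push us out of Group I into Group III). Handling the $\pm$ and $\lambda=\pm1$ sign cases uniformly, and pinning down the normalization of $f_{22}$ that makes the relevant linear combination exactly $\eta x+\beta t$, are the fiddly points; everything after that is integration of linear ODEs.
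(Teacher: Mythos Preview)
Your proposal is correct and follows essentially the same approach as the paper: substitute the Group~I relations ($f_{31}=\pm f_{11}$, $f_{32}=\pm f_{12}$, $\Delta_{13}=0$, $f_{22}=\beta$ constant) into the reduced Codazzi equations (\ref{Eq1-2n})--(\ref{Eq2-2n}), then differentiate successively with respect to $z_{k-1}$ (using $f_{12,z_{k-1}}\neq 0$) and with respect to $z_0$ (using $f_{11,z_0}\neq 0$) to obtain $a_x\mp\eta(a-c)=0$, $b_x\mp 2\eta b=0$, $a_t\mp\beta(a-c)=0$, $b_t\mp 2\beta b=0$, which integrate directly to the stated formulas in the variable $\eta x+\beta t$. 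Two small clarifications will streamline your execution: $f_{22}=\beta$ is already a \emph{constant} from the Chern--Tenenblat classification (no reparametrization or ``ODE for $f_{22}$'' is needed), and the case $b\equiv 0$ is not spurious but is simply the subcase $\gamma=0$---the branch actually excluded in the paper is $a=0$, which would force $c=0$ and contradict the Gauss equation.
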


\begin{proof} For evolution equations of type  I, we have 
$f_{31}=\pm f_{11}$, $f_{32}=\pm f_{12}$, 
 $f_{11,z_0}\neq 0$, $f_{12,z_{k-1}}\neq 0$ and the equation is given by 
 (\ref{EvolEx1}). Moreover,  $f_{22}=\beta $ is independent of $z_0, \dots, z_k$.  Equations (\ref{Eq1-2n}) and (\ref{Eq2-2n}) become
\begin{eqnarray}\label{eq1S}
f_{11}a_t + \eta b_t  - f_{12}a_x - \beta b_x \mp (a-c)(\beta f_{11} - \eta f_{12}) = 0,\\ \label{eq2S}
f_{11}b_t + \eta c_t -  f_{12}b_x - \beta c_x \mp 2b(\beta f_{11} - \eta f_{12})= 0.
\end{eqnarray}
Since  $k\geq 2$, differentiating the latter two equations with respect to $z_{k-1}$ leads to 
\begin{eqnarray*}\label{eqA1}
-f_{12,z_{k-1}} a_x \pm (a-c)\eta f_{12,z_{k-1}} = 0,\\\label{eqA2}
-f_{12,z_{k-1}} b_x \pm 2b \eta f_{12,z_{k-1}} = 0.
\end{eqnarray*}
Note that $f_{12,z_{k-1}} \neq 0$ by hypothesis, which means that these equations  simplify to 
\begin{eqnarray}\label{Cax=0}
a_{x} \mp \eta (a-c) = 0,\\\label{Cbx=0}
b_{x}\mp  2\eta b = 0.
\end{eqnarray}
Taking into account (\ref{Cax=0}) and (\ref{Cbx=0}), equations (\ref{eq1S}) and  (\ref{eq2S}) become 
\begin{eqnarray}\label{eq1S2}
f_{11}a_t + \eta b_t  - \beta b_x \mp  (a-c) \beta f_{11} = 0,\\ \label{eq2S2}
f_{11}b_t + \eta c_t - \beta c_x \mp 2b \beta f_{11} = 0.
\end{eqnarray}
Differentiating (\ref{eq1S2}) and (\ref{eq2S2}) with respect to $z_{0}$, with $f_{11,z_0} \neq 0$, leads to
\begin{eqnarray}\label{Cat=0}
a_{t} \mp \beta (a-c) = 0,\\\label{Cbt=0}
b_{t}\mp  2\beta b = 0,
\end{eqnarray}
and hence, (\ref{eq1S2}) and (\ref{eq2S2}) become
\begin{eqnarray}\label{btbx=0}
\eta b_{t} - \beta b_{x} = 0,\\\label{ctcx=0}
\eta c_{t} - \beta c_{x} = 0.
\end{eqnarray}
Note that  (\ref{Cax=0}) and (\ref{Cat=0}) imply 
\begin{eqnarray*}\label{atbx=0}
\eta a_{t} - \beta a_{x} = 0.
\end{eqnarray*}
 From (\ref{Cbx=0}) and (\ref{Cbt=0}), we conclude that 
\begin{equation}\label{b}
b= \gamma e^{\pm 2(\eta x + \beta t)}, \quad \gamma \in \mathbb{R}.
\end{equation}
Note that $a\neq 0$. Otherwise, if $a=0$, then (\ref{Cax=0}) implies that $c=0$ and the Gauss equation leads to $b=\pm1$ which contradicts (\ref{Cbx=0}). Therefore, from the Gauss equation  we have 
\begin{equation}\label{cGauss}
c={(b^2-1)}{a^{-1}}.
\end{equation}
 Then, in view of (\ref{b}),  
equations (\ref{Cax=0}) and (\ref{Cat=0}) reduce to 
\begin{eqnarray*}
aa_x\mp \eta (a^2 - \gamma^2e^{\pm 4(\eta x + \beta t)} + 1) = 0,\\
aa_t \mp\beta( a^2  -  \gamma^2e^{\pm 4(\eta x + \beta t)} + 1) = 0.\\
\end{eqnarray*}
The latter system leads then to 
\begin{equation*}
a= \sqrt{le^{\pm 2(\eta x + \beta t )} - \gamma^2e^{\pm 4(\eta x + \beta t )} - 1}, \quad l\in \mathbb{R},
\end{equation*}
which is defined wherever $le^{\pm 2(\eta x + \beta t )} - \gamma^2e^{\pm  4(\eta x + \beta t )} - 1>0$. 
Hence $l>0$ and 
\[ 
\dfrac{l-\sqrt{l^2-4\gamma^2}}{2\gamma^2} < e^{\pm 2(\eta x + \beta t )}< \dfrac{l+\sqrt{l^2-4\gamma^2}}{2\gamma^2},
\]
{\it i.e.}, $a$ is defined on the strip described by (\ref{strip}).  
Now,  from (\ref{cGauss}), we obtain 
\begin{equation*}
c= \dfrac{\gamma^2e^{\pm 4(\eta x + \beta t)} - 1}
{ \sqrt{le^{\pm 2(\eta x + \beta t )} - \gamma^2e^{\pm 4(\eta x +\beta t )} - 1}}. 
\end{equation*}
 A straightforward computation shows that the converse holds. 
Finally, we observe that given a solution of the evolution equation, in order to have an immersion, one has to choose the constants $l$ and $\gamma$, such that the strip (\ref{strip}) intersects the domain
of the solution in $\mathbb{R}^2$.
\end{proof}

%\begin{Thm}[KKT 2015]\label{LemCoeff}Let \begin{equation}\label{evolk}u_t=F(u, u_{x}, \dots, u_{x^{k}})\end{equation} be an evolution equation of order $k$ describing $\eta$ pseudo-spherical surfaces. If there exists a local isometric immersion of a surface determined by a solution $u$ for which  the  coefficients of the second fundamental form depend on a jet of finite order of $u$, i.e., $a, b$ and $c$ depend on $x, t, u, \dots, u_{x^{l}}$, where $l$ is finite, then $a, b$ and $c$ are universal, that is $l=0$ and $a, b$ and $c$ depend at most on $x$ and $t$ only.  \end{Thm}

Next, we consider the evolution  equations covered in item II of Remark 1.  

\begin{Prop}\label{Prop2}   For $k$-th evolution equations, $k\geq 2$,  describing $\eta$ pseudo-spherical surfaces with  associated $1$-forms $\omega^{i} = f_{i1}dx + f_{i2}dt$, $1\leq i\leq 3$, where $f_{31} = \lambda f_{11} \neq 0$, and $\lambda^2 \neq 1$, the system of equations (\ref{Eq1}), (\ref{Eq2}) and  (\ref{Gauss}) is inconsistent. 
\end{Prop}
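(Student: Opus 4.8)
The plan is to derive a contradiction with the structure equation $(\ref{Delta12})$ from the assumption that coefficients $a,b,c$ depend on a finite jet of $u$; by \cite{KKT2} such coefficients must in fact be universal, so $(\ref{Eq1})$ and $(\ref{Eq2})$ reduce to $(\ref{Eq1-2n})$ and $(\ref{Eq2-2n})$. For equations in item~II of Remark~1 I first record what is available: $f_{31}=\lambda f_{11}$ with $f_{11}\neq0$ and $\lambda$ independent of $z_0,\dots,z_k$ (a consequence of $L=0$ and $(\ref{std1})$), $\lambda^2\neq1$; $f_{11}$ depends only on $x,t,z_0$, and $f_{11,z_0}\neq0$ by $(\ref{std1})$, $(\ref{std7})$ and $f_{31,z_0}=\lambda f_{11,z_0}$; $f_{12}$ depends on $x,t,z_0,\dots,z_{k-1}$ with $f_{12,z_{k-1}}\neq0$, for otherwise $(\ref{SEq1n})$ would make $F$ independent of $z_k$; and $f_{22,z_k}=f_{22,z_{k-1}}=f_{22,z_{k-2}}=0$ by $(\ref{std2})$ and Remark~1. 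Then $(\ref{SEq2n})$ gives $\Delta_{13}=f_{11}(f_{32}-\lambda f_{12})=S$, where $S:=\sum_{i=0}^{k-3}f_{22,z_i}z_{i+1}$ is a function of $x,t,z_0,\dots,z_{k-2}$ (identically zero when $k=2$), whence $\Delta_{23}=\eta f_{32}-\lambda f_{11}f_{22}=-\lambda\Delta_{12}+\eta S/f_{11}$.

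First I would differentiate $(\ref{Eq1-2n})$ and $(\ref{Eq2-2n})$ with respect to $z_{k-1}$. Only $f_{12}$ and $f_{32}$ depend on $z_{k-1}$, and $f_{32,z_{k-1}}=\lambda f_{12,z_{k-1}}$ since $S/f_{11}$ does not involve $z_{k-1}$; the $f_{32}$- and $f_{31}f_{12}$-contributions then cancel, and dividing by $f_{12,z_{k-1}}\neq0$ leaves
\[
a_x=\lambda\eta(a-c),\qquad b_x=2\lambda\eta b .
\]
Substituting these relations back into $(\ref{Eq1-2n})$, the $f_{12}$-terms drop out; multiplying the result by $f_{11}$, the equation collapses to
\[
f_{11}^{\,2}\big(a_t-\lambda(a-c)f_{22}\big)+\eta f_{11}\big(b_t-2\lambda b f_{22}\big)+S\big(\eta(a-c)-2bf_{11}\big)=0 .
\]

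The order analysis of this last identity is the crux. In it, $f_{11}$ depends only on $x,t,z_0$, the functions $a,b,c,a_t,b_t$ only on $x,t$, $f_{22}$ only on $x,t,z_0,\dots,z_{k-3}$, and $S$ only on $x,t,z_0,\dots,z_{k-2}$ with $\partial S/\partial z_{k-2}=f_{22,z_{k-3}}$. Differentiating the identity with respect to $z_{k-2}$ therefore yields $f_{22,z_{k-3}}\big(\eta(a-c)-2bf_{11}\big)=0$; were $\eta(a-c)-2bf_{11}$ to vanish on an open set, differentiating it with respect to $z_0$ would give $b=0$ (because $f_{11,z_0}\neq0$), hence $a=c$, contradicting the Gauss equation $(\ref{Gauss})$, so $f_{22,z_{k-3}}=0$. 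Iterating — differentiating the identity successively with respect to $z_{k-2},z_{k-3},\dots,z_1$ — gives $f_{22,z_{k-3}}=\dots=f_{22,z_0}=0$, so $f_{22}$ depends on $x,t$ only, $S\equiv0$, and $f_{32}=\lambda f_{12}$ (for $k=2$ this is immediate).

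To finish, with $f_{22}=f_{22}(x,t)$ and $f_{32}=\lambda f_{12}$, I would return to the structure equations: using also $f_{31}=\lambda f_{11}$, $f_{31,z_0}=\lambda f_{11,z_0}$ and $f_{32,z_i}=\lambda f_{12,z_i}$, taking $\lambda$ times $(\ref{SEq1n})$ and subtracting $(\ref{SEq3n})$ eliminates $F$ and every summation term, leaving $(\lambda^2-1)(\eta f_{12}-f_{11}f_{22})=0$, i.e. $(\lambda^2-1)\Delta_{12}=0$. Since $\lambda^2\neq1$ this forces $\Delta_{12}=0$, contradicting $(\ref{Delta12})$; hence no such $a,b,c$ exists and the system $(\ref{Eq1})$, $(\ref{Eq2})$, $(\ref{Gauss})$ is inconsistent. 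I expect the only delicate step to be the iterated differentiation forcing $f_{22}$ to be independent of all the $z_i$; once $f_{22}$ is universal, the concluding contradiction is a one-line computation.
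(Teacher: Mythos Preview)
Your argument is correct, but it reaches the contradiction by a genuinely different route from the paper. Both proofs begin identically: from the structure equations one gets $f_{32,z_{k-1}}=\lambda f_{12,z_{k-1}}$ and $f_{22,z_{k-2}}=0$, and differentiating (\ref{Eq1-2n}), (\ref{Eq2-2n}) in $z_{k-1}$ yields $a_x=\lambda\eta(a-c)$, $b_x=2\lambda\eta b$. The divergence comes at the next step. The paper substitutes these back into \emph{both} Codazzi equations and differentiates once in $z_{k-2}$; the key structural input is the single relation
\[
f_{32,z_{k-2}}-\lambda f_{12,z_{k-2}}=\eta(\lambda^2-1)f_{12,z_{k-1}}\neq 0,
\]
obtained by differentiating $(\ref{seq3-B1})$ in $z_{k-1}$. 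This nonvanishing factor survives the $z_{k-2}$--differentiation and produces the $2\times2$ system $\bigl(\begin{smallmatrix}\eta & -f_{11}\\ f_{11}&\eta\end{smallmatrix}\bigr)\bigl(\begin{smallmatrix}a-c\\2b\end{smallmatrix}\bigr)=0$, contradicting the Gauss equation. For $k=2$ the paper defers to~\cite{KKT1}.

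Your route instead uses only the \emph{first} Codazzi equation, packages the $f_{12}$-- and $f_{32}$--terms into $S(\eta(a-c)-2bf_{11})$, and runs a downward induction: successive differentiation in $z_{k-2},\dots,z_1$ kills $f_{22,z_{k-3}},\dots,f_{22,z_0}$, forcing $S\equiv0$ and $f_{32}=\lambda f_{12}$; then $\lambda\cdot(\ref{SEq1n})-(\ref{SEq3n})$ collapses to $(\lambda^2-1)\Delta_{12}=0$, contradicting~(\ref{Delta12}) rather than the Gauss equation. The paper's approach is shorter (one differentiation instead of $k-2$) and exploits a sharper structural fact; your approach is more elementary and has the pleasant feature that it treats $k=2$ uniformly --- indeed, for $k=2$ your final computation uses no Codazzi information at all and shows that group~II is actually empty, so the proposition holds vacuously there.
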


\begin{proof}When $f_{31} = \lambda f_{11} \neq 0$, with $\lambda^2 \neq 1$, the structure equations (\ref{SEq1n}), (\ref{SEq2n})  and (\ref{SEq3n}) can be rewritten as
\begin{eqnarray}\label{seq1-B}
f_{11,z_0}F = \sum_{i=0}^{k-1}f_{12,z_{i}}z_{i+1} + \eta f_{32} - \lambda f_{11}f_{22}, \\\label{seq2-B}
\sum_{i=0}^{k-2} f_{22,z_i}z_{i+1} = f_{11}(f_{32} - \lambda f_{12}),\\\label{seq3-B}
\lambda f_{11,z_0}F = \sum_{i=0}^{k-1}f_{32,z_{i}}z_{i+1} + \eta f_{12} -  f_{11}f_{22} = 0. 
\end{eqnarray}
Differentiating (\ref{seq1-B}) with respect to $z_{k}$ leads to $f_{11,z_0}F_{z_k} = f_{12,z_{k-1}}$. Because $f_{11,z_0} \neq 0$ (otherwise (\ref{std7})
would fail) and $F_{z_{k}} \neq 0$ (otherwise $F$ is not a $k$-th order evolution equation), we conclude that $f_{12,z_{k-1}} \neq 0$.  Subtracting $\lambda$ times equation (\ref{seq1-B}) from (\ref{seq3-B}) leads to 
\begin{equation}\label{seq3-B1}
\sum_{i=0}^{k-1} (f_{32,z_i} - \lambda f_{12,z_i}) z_{i+1} + \eta (f_{12} - \lambda f_{32}) + (\lambda^2 - 1)f_{11}f_{22} = 0. 
\end{equation}
Differentiating the latter with respect to $z_{k}$ leads to 
\begin{equation}\label{condB1}
f_{32,z_{k-1}} - \lambda f_{12,z_{k-1}} = 0. 
\end{equation}

Since  $k\geq 2$, differentiating equation (\ref{seq2-B}) with respect to $z_{k-1}$ and taking into account (\ref{condB1}) leads to 
\begin{equation}\label{condf22neq0}
f_{22,z_{k-2}} = 0.
\end{equation}
Differentiating (\ref{seq3-B1}) with respect to $z_{k-1}$ leads to 
\begin{equation}\label{condB2}
f_{32,z_{k-2}} - \lambda f_{12,z_{k-2}} + \eta (f_{12,z_{k-1}} - \lambda f_{32, z_{k-1}}) = 0. 
\end{equation}
Substituting $f_{32,z_{k-1}}$ by $\lambda f_{12,z_{k-1}}$, which is just equation (\ref{condB1}), equation (\ref{condB2}) becomes 
\begin{equation*}\label{condB3}
f_{32,z_{k-2}} - \lambda f_{12,z_{k-2}}  =  \eta (\lambda^2 - 1)f_{12,z_{k-1}}. 
\end{equation*}
Because neither of $\eta$, $\lambda^2 -1$ and $f_{12,z_{k-1}}$ is zero, we conclude that when $f_{31} = \lambda f_{11} \neq 0$  and $\lambda^2 \neq 1$, we have 
\begin{equation}\label{condB4}
f_{32,z_{k-2}} - \lambda f_{12,z_{k-2}}  \neq 0. 
\end{equation}

Consider now equations (\ref{Eq1-2n}) and (\ref{Eq2-2n}), which can be written as follows:
\begin{eqnarray}\label{eq1B1}
f_{11}a_t + \eta b_t - f_{12}a_x - f_{22}b_x - 2bf_{11}(f_{32} - \lambda f_{12}) + (a-c)(\eta f_{32} - \lambda f_{11}f_{22}) = 0,\\\label{eq2B1}
f_{11}b_t + \eta c_t - f_{12}b_x - f_{22}c_x +(a-c)f_{11}(f_{32} - \lambda f_{12}) + 2b(\eta f_{32} - \lambda f_{11}f_{22}) = 0. 
\end{eqnarray}
 Differentiating (\ref{eq1B1}) and (\ref{eq2B1}) with respect to $z_{k-1}$ and taking into account (\ref{condf22neq0}) leads to 
\begin{eqnarray*}\label{eq1B2}
-f_{12,z_{k-1}} a_x - 2b f_{11} (f_{32,z_{k-1}} - \lambda f_{12,z_{k-1}}) + (a-c)\eta f_{32,z_{k-1}} = 0,\\\label{eq2B2}
-f_{12,z_{k-1}} b_x - 2b f_{11} (f_{32,z_{k-1}} - \lambda f_{12,z_{k-1}}) + 2b\eta f_{32,z_{k-1}} = 0. 
\end{eqnarray*}
Taking into account (\ref{condB1}) and the fact that $f_{12,z_{k-1}}\neq 0$, these two equations reduce to 
\begin{eqnarray*}\label{eq1B3}
a_x  =  (a-c)\eta \lambda,\\\label{eq2B3}
b_x = 2b \eta \lambda.
\end{eqnarray*}
Substituting the latter two equations in (\ref{eq1B1}) and (\ref{eq2B1}) leads 
\begin{eqnarray*}\label{eq1B4}
f_{11}a_t + \eta b_t - f_{12}\eta \lambda (a-c) - f_{22} \eta \lambda 2b - 2bf_{11}(f_{32} - \lambda f_{12}) + (a-c)(\eta f_{32} - \lambda f_{11}f_{22}) = 0,\\\label{eq2B4}
f_{11}b_t + \eta c_t - f_{12} \eta \lambda 2b  - f_{22}c_x +(a-c)f_{11}(f_{32} - \lambda f_{12}) + 2b(\eta f_{32} - \lambda f_{11}f_{22}) = 0. 
\end{eqnarray*}

\vspace{.1in}

If $k\geq 3$, differentiating these two equations  with respect to $z_{k-2}$ leads to 
\begin{eqnarray*}
-f_{12,z_{k-2}} \eta \lambda (a-c) - 2b f_{11} (f_{32,z_{k-2}} - \lambda f_{12,z_{k-2}}) + (a-c) \eta f_{32,z_{k-2}} = 0,\\
-f_{12,z_{k-2}} \eta \lambda 2 b+  (a-c) f_{11} (f_{32,z_{k-2}} - \lambda f_{12,z_{k-2}}) + 2b\eta f_{32,z_{k-2}} = 0,
\end{eqnarray*}
which can be rewritten as
\begin{eqnarray*}\label{eq1B5}
(f_{32,z_{k-2}} - \lambda f_{12,z_{k-2}})((a-c)\eta-2bf_{11})=0,\\
(f_{32,z_{k-2}} - \lambda f_{12,z_{k-2}})((a-c)f_{11}+2b\eta).
\end{eqnarray*}
Since from (\ref{condB4}), $f_{32,z_{k-2}} - \lambda f_{12,z_{k-2}} \neq 0$, we can rewrite the two equations in matrix form as follows
\begin{equation*}
\left(\begin{array}{cc}\eta & -f_{11} \\f_{11} & \eta\end{array}\right)\left(\begin{array}{c}a-c \\2b\end{array}\right) = 0.
\end{equation*}
Since $f_{11,z_0}\neq 0$, the determinant  $f_{11}^2 + \eta^2 \neq 0$, and hence $a-c=b=0$. The latter contradicts the Gauss equation (\ref{Gauss}). 

When $k=2$, the proof of this proposition was given in \cite{KKT1}. 
Therefore,  we conclude that when $f_{31} = \lambda f_{11} \neq 0$, $\lambda^2 \neq 1$ for any  $k\geq 2$, the system of equations (\ref{Eq1}), (\ref{Eq2}) and  (\ref{Gauss}) is inconsistent. 
\end{proof}

The following Proposition is concerned with the evolution equations covered by item III of Remark 1.

\begin{Prop}\label{Prop3} For $k$-th evolution equations, $k\geq 2$,  describing $\eta$ pseudo-spherical surfaces with  associated $1$-forms $\omega^{i} = f_{i1}dx + f_{i2}dt$, $1\leq i\leq 3$, where either $f_{11} = 0$ or $f_{31} = 0$, the system of equations (\ref{Eq1}), (\ref{Eq2}) and  (\ref{Gauss}) is inconsistent. 
\end{Prop}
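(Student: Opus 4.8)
The plan is to argue by contradiction and treat separately the two alternatives in item III of Remark 1: the branch $f_{11}\equiv 0$ (where (\ref{std7}) forces $f_{31,z_0}\neq 0$) and the branch $f_{31}\equiv 0$ (where $f_{11,z_0}\neq 0$). By the result of \cite{KKT2} already recalled above, it suffices to show that in each branch the system (\ref{Eq1-2n}), (\ref{Eq2-2n}), (\ref{Gauss}) has no solution with $a,b,c$ depending on $x,t$ only. In both branches I would proceed the same way: first read off from the structure equations (\ref{SEq1n})--(\ref{SEq3n}) together with (\ref{std1})--(\ref{std2}) exactly which jet coordinates $f_{12},f_{22},f_{32}$ may depend on, and then differentiate the Codazzi equations with respect to the top jet coordinates until $b=0$ and $a=c$ are forced, which contradicts (\ref{Gauss}).

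In the branch $f_{11}\equiv 0$ I would use that $f_{11,z_0}\equiv 0$, so (\ref{SEq1n}) becomes $\sum_{i=0}^{k-1}f_{12,z_i}z_{i+1}+\eta f_{32}-f_{31}f_{22}=0$; since $f_{12},f_{22},f_{31},f_{32}$ are all independent of $z_k$, differentiating in $z_k$ gives $f_{12,z_{k-1}}=0$, while differentiating (\ref{SEq3n}) in $z_k$ gives $f_{31,z_0}F_{z_k}=f_{32,z_{k-1}}$, hence $f_{32,z_{k-1}}\neq 0$ because $f_{31,z_0}\neq 0$ and $F_{z_k}\neq 0$. Writing (\ref{Eq1-2n}) and (\ref{Eq2-2n}) with $f_{11}=0$ and differentiating both in $z_{k-1}$, every term vanishes except the one carrying $f_{32}$, leaving $(a-c)\eta f_{32,z_{k-1}}=0$ and $2b\eta f_{32,z_{k-1}}=0$, i.e.\ $a=c$ and $b=0$; then $ac-b^2=a^2\geq 0$, contradicting (\ref{Gauss}). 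This argument works for every $k\geq 2$.

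In the branch $f_{31}\equiv 0$ I would first note $f_{31,z_0}\equiv 0$ and $f_{11,z_0}\neq 0$; differentiating (\ref{SEq3n}) in $z_k$ gives $f_{32,z_{k-1}}=0$, then differentiating (\ref{SEq1n}) in $z_k$ gives $f_{11,z_0}F_{z_k}=f_{12,z_{k-1}}$, hence $f_{12,z_{k-1}}\neq 0$, and differentiating (\ref{SEq3n}) in $z_{k-1}$ (using $f_{32,z_{k-1}}=0=f_{22,z_{k-1}}$) gives $f_{32,z_{k-2}}=-\eta f_{12,z_{k-1}}\neq 0$. Differentiating (\ref{Eq1-2n}) and (\ref{Eq2-2n}) with $f_{31}=0$ in $z_{k-1}$ forces $a_x=0$ and $b_x=0$; the possibility $a\equiv 0$ is excluded because by (\ref{Gauss}) it makes $b$ locally constant and then (\ref{Eq1-2n}) forces $f_{32}(2bf_{11}+c\eta)=0$, impossible since $f_{32}\not\equiv 0$ and $c$ cannot involve $z_0$; hence, by (\ref{Gauss}), $c_x=0$ as well, and $a,b,c$ depend on $t$ alone, so (\ref{Eq1-2n}), (\ref{Eq2-2n}) reduce to
\begin{eqnarray*}
f_{11}a_t+\eta b_t-2bf_{11}f_{32}+(a-c)\eta f_{32}=0,\\
f_{11}b_t+\eta c_t+(a-c)f_{11}f_{32}+2b\eta f_{32}=0.
\end{eqnarray*}
For $k\geq 3$, $f_{11}$ does not involve $z_{k-2}$, so differentiating these two in $z_{k-2}$ and using $f_{32,z_{k-2}}\neq 0$ I would obtain
\[
\left(\begin{array}{cc}\eta & -f_{11}\\ f_{11} & \eta\end{array}\right)\left(\begin{array}{c}a-c\\ 2b\end{array}\right)=0,
\]
and since $\eta^2+f_{11}^2\neq 0$ this gives $a=c$ and $b=0$, again contradicting (\ref{Gauss}). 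The remaining case $k=2$ of this branch I would take from \cite{KKT1}.

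I expect the only genuine difficulty to be the bookkeeping of which jet coordinate each $f_{ij}$ is allowed to depend on; in particular, in the branch $f_{31}\equiv 0$ the clean differentiation in $z_{k-2}$ requires $f_{11}$ to be independent of $z_{k-2}$, which holds precisely when $k\geq 3$, so the borderline value $k=2$ is the delicate point and is disposed of by the earlier second-order analysis in \cite{KKT1}.
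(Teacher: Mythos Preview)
Your proposal is correct and follows essentially the same route as the paper: in each branch you read off from the structure equations which of the $f_{ij,z_{k-1}}$ vanish or not, then differentiate the Codazzi equations (\ref{Eq1-2n})--(\ref{Eq2-2n}) in the top jet variables until $a-c=b=0$ contradicts (\ref{Gauss}), deferring the $k=2$ case of the $f_{31}=0$ branch to \cite{KKT1}. The only minor deviation is in the $f_{31}=0$ branch, where the paper eliminates the $-f_{22}c_x$ term by first deriving $f_{22,z_{k-2}}=0$ from (\ref{SEq2n}), whereas you instead obtain $c_x=0$ from the Gauss equation after separately ruling out $a\equiv 0$; both devices achieve the same end.
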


\begin{proof} 
i) When $f_{11} = 0$, 
%$$\Delta_{12} = -\eta f_{12}; \quad \Delta_{13} = -f_{31}f_{12}; \quad %\Delta_{23} = \eta f_{32} - f_{31}f_{22}$$
%Therefore, 
the structure equations (\ref{SEq1n}), (\ref{SEq2n}) and (\ref{SEq3n}) can be rewritten as follows:
\begin{eqnarray}\label{se1C0}
\sum_{i=0}^{k-1} f_{12,z_i}z_{i+1} = -\eta f_{32} + f_{31}f_{22},\\\label{se2C0}
\sum_{i=0}^{k-2} f_{12,z_i}z_{i+1} = -f_{31}f_{12},\\\label{se3C0}
f_{31,z_0} F = \sum_{i=0}^{k-1} f_{32,z_i}z_{i+1} + \eta f_{12}.
\end{eqnarray}
Differentiating (\ref{se1C0}) with respect to $z_{k}$ leads to $f_{12,z_{k-1}} = 0$. 

Since  $k\geq 2$, differentiating (\ref{se2C0}) with respect to $z_{k-1}$,  leads to $f_{22,z_{k-2}} = 0$. Differentiating (\ref{se3C0}) with respect to $z_{k}$ leads to 
\begin{equation*}
f_{31,z_0} F_{z_{k}} = f_{32,z_{k-1}}. 
\end{equation*}
On one hand, $F_{z_{k}} \neq 0$ because the evolution equation $F$ is of order $k$. On the other hand, $f_{31,z_0} \neq 0$, otherwise (\ref{std7}) is not satisfied. We conclude then that
\begin{equation*}
f_{32,z_{k-1}}\neq 0. 
\end{equation*}
Equations (\ref{Eq1-2n}) and (\ref{Eq2-2n}) become
\begin{eqnarray*}\label{eq1C1}
\eta b_t - f_{12}a_x - f_{22}b_x + 2b f_{31}f_{12} + (a-c) (\eta f_{32} - f_{31}f_{22}) = 0\\\label{eq2C1}
\eta c_t - f_{12}b_x - f_{22}c_x - (a-c)f_{31}f_{12} + 2b (\eta f_{32} - f_{31}f_{22}) = 0
\end{eqnarray*}
Differentiating the latter two equations  with respect to $z_{k-1}$  leads to 
\begin{eqnarray*}
(a-c)\eta f_{32,z_{k-1}} = 0,\\
2b \eta f_{32,z_{k-1}} = 0,
\end{eqnarray*}
and since $f_{32,z_{k-1}}\neq 0$, we conclude that $a-c = b = 0$, which contradicts the Gauss equation (\ref{Gauss}).
  Therefore,  for any $k\geq 2$, when $f_{11} = 0$, the system of equations (\ref{Eq1}), (\ref{Eq2}) and  (\ref{Gauss}) is inconsistent.\\

ii) When $f_{31} = 0$, 
%we have 
%$$\Delta_{12} = f_{11}f_{22}-\eta f_{12}; \quad \Delta_{13} = %f_{11}f_{32}; \quad \Delta_{23} = \eta f_{32}$$
%Therefore, 
the structure equations (\ref{SEq1n}), (\ref{SEq2n}) and (\ref{SEq3n}) can be rewritten as follows:
\begin{eqnarray}\label{se1C}
f_{11,z_0} F = \sum_{i=0}^{k-1} f_{12,z_i}z_{i+1} + \eta f_{32}, \\\label{se2C}
\sum_{i=0}^{k-2} f_{12,z_i}z_{i+1} = f_{11}f_{32},\\\label{se3C}
\sum_{i=0}^{k-1} f_{32,z_i}z_{i+1} =f_{11}f_{22} - \eta f_{12}.
\end{eqnarray}
Differentiating (\ref{se3C}) with respect to $z_{k}$ leads to $f_{32,z_{k-1}} = 0$. 

Since  $k\geq 2$,  differentiating (\ref{se2C}) with respect to $z_{k-1}$ leads to $f_{22,z_{k-2}} = 0$. Differentiating (\ref{se1C}) with respect to $z_{k}$ leads to 
\begin{equation*}
f_{11,z_0} F_{z_{k}} = f_{12,z_{k-1}}.
\end{equation*}
On one hand, $F_{z_{k}} \neq 0$ because the evolution equation $F$ is of order $k$. On the other hand, $f_{11,z_0} \neq 0$, otherwise (\ref{std7}) is not satisfied. We conclude then that
\begin{equation}\label{f12zk-1neq0}
f_{12,z_{k-1}}\neq 0.
\end{equation}
Equations (\ref{Eq1-2n}) and (\ref{Eq2-2n}) become
\begin{eqnarray}\label{eq1D1}
f_{11}a_t + \eta b_t - f_{12} a_x - f_{22}b_x - 2bf_{11}f_{32} + (a-c)\eta f_{32} = 0,\\\label{eq2D1}
 f_{11}b_t + \eta c_t - f_{12} b_x - f_{22}c_x + (a-c)f_{11}f_{32} + 2b\eta f_{32} = 0.
\end{eqnarray}
Differentiating (\ref{eq1D1}) and (\ref{eq2D1}) with respect to $z_{k-1}$ and taking into account (\ref{f12zk-1neq0}) leads to 
\begin{equation*}
a_x = b_x = 0
\end{equation*}
With  (\ref{f12zk-1neq0}),  equations (\ref{eq1D1}) and (\ref{eq2D1}) become
\begin{eqnarray}\label{eq1D2}
f_{11}a_t + \eta b_t - 2bf_{11}f_{32} + (a-c)\eta f_{32} = 0,\\\label{eq2D2}
 f_{11}b_t + \eta c_t  - f_{22}c_x + (a-c)f_{11}f_{32} + 2b\eta f_{32} = 0.
\end{eqnarray}
Taking into account the fact that $f_{32, z_{k-1}} =0$, and then differentiating (\ref{se3C}) with respect to $z_{k-1}$ leads to 
\begin{equation*}
f_{32,z_{k-2}} = -\eta f_{12,z_{k-1}}
\end{equation*}
and because neither $\eta$ nor $f_{12,z_{k-1}}$ vanishes, we have
\begin{equation*}
f_{32,z_{k-2}} \neq 0.
\end{equation*}

If $k\geq 3$, differentiating then (\ref{eq1D2}) and (\ref{eq2D2}) with respect to $z_{k-2}$, then dividing by $f_{32,z_{k-2}}$, and rewriting the two equations in matrix form, leads to
\begin{equation*}
\left(\begin{array}{cc}\eta & -f_{11} \\f_{11} & \eta\end{array}\right)\left(\begin{array}{c}a-c \\2b\end{array}\right) = 0.
\end{equation*}
Since $f_{11,z_0}\neq 0$, the determinant  $f_{11}^2 + \eta^2 \neq 0$, and hence  $a-c=b=0$, which contradicts the Gauss equation (\ref{Gauss}). 

If $k=2$, the proof of this proposition was given in \cite{KKT1}.
Therefore, we conclude that  for any $k\geq 2$, when $f_{31} = 0$, the system of equations (\ref{Eq1}), (\ref{Eq2}) and  (\ref{Gauss}) is inconsistent. 
\end{proof}

Next, we consider the evolution equations in item IV of  Remark 1. 

\begin{Prop}\label{prop4} For $k$-th order evolution equations, $k\geq 2$,  describing $\eta$ pseudo-spherical surfaces with  associated $1$-forms $\omega^{i} = f_{i1}dx + f_{i2}dt$, $1\leq i\leq 3$, where $f_{31}^2 - f_{11}^2 = {C} \neq 0$, the system of equations (\ref{Eq1}), (\ref{Eq2}) and  (\ref{Gauss}) is inconsistent. 
\end{Prop}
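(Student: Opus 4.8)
The plan is to follow the same template that worked for Propositions \ref{Prop2} and \ref{Prop3}: exploit the structure equations to pin down which $f_{12,z_i}$ and $f_{32,z_i}$ are nonzero, then differentiate the two Codazzi equations (\ref{Eq1-2n}), (\ref{Eq2-2n}) with respect to the top $x$-derivatives appearing in the $f_{ij}$ to force $a-c=b=0$, contradicting the Gauss equation (\ref{Gauss}). Concretely, in Group IV we have $H=0$, i.e.\ $f_{11}f_{11,z_0}=f_{31}f_{31,z_0}$, together with $L\neq 0$ and $f_{31}^2-f_{11}^2=C\neq 0$; I would first record the consequences of $H=0$ and (\ref{std7}), namely that $f_{11,z_0}$ and $f_{31,z_0}$ are proportional to $f_{31}$ and $f_{11}$ respectively with the same factor, and in particular neither can vanish identically on an open set unless the other one compensates. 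Differentiating (\ref{SEq1n}) and (\ref{SEq3n}) with respect to $z_k$ gives $f_{11,z_0}F_{z_k}=f_{12,z_{k-1}}$ and $f_{31,z_0}F_{z_k}=f_{32,z_{k-1}}$; since $F_{z_k}\neq 0$ (order $k$) and $f_{11,z_0}^2+f_{31,z_0}^2\neq 0$, at least one of $f_{12,z_{k-1}}$, $f_{32,z_{k-1}}$ is nonzero, and in fact $f_{11,z_0}f_{32,z_{k-1}}=f_{31,z_0}f_{12,z_{k-1}}$. I would also use (\ref{SEq2n}) differentiated in $z_{k-1}$ to get $f_{22,z_{k-2}}=0$ (for $k\geq 3$; the case $k=2$ is again cited from \cite{KKT1}).

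Next I would substitute the Group IV data into (\ref{Eq1-2n}) and (\ref{Eq2-2n}) and differentiate with respect to $z_{k-1}$, using $f_{22,z_{k-1}}=0$. This produces two linear equations whose coefficients involve $f_{12,z_{k-1}}$, $f_{32,z_{k-1}}$, $f_{11}$, $f_{31}$, $\eta$, and the unknowns $a_x$, $b_x$, $a-c$, $b$. If I can show the resulting $2\times 2$ system in $(a-c,2b)$ (after eliminating $a_x,b_x$ by differentiating a second time, say with respect to $z_{k-2}$ when $k\geq 3$, and using $f_{22,z_{k-2}}=0$ and whichever of $f_{12,z_{k-2}}$, $f_{32,z_{k-2}}$ survives) has nonzero determinant — I expect the determinant to again be of the form $f_{11}^2+\eta^2$ or $f_{31}^2+\eta^2$, or more precisely a combination forced to be nonzero by $C\neq 0$ and (\ref{std7}) — then $a-c=b=0$ follows, contradicting (\ref{Gauss}). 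The condition $f_{31}^2-f_{11}^2=C\neq 0$ is exactly what should prevent the degenerate alignment $f_{31}=\pm f_{11}$ of Group I, so it must enter precisely at the point of showing this determinant does not vanish.

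The main obstacle I anticipate is bookkeeping the two ``channels'' $f_{12}$ and $f_{32}$ simultaneously: unlike Groups I, II and III where one of $f_{11},f_{31}$ is a known multiple of the other (or zero), in Group IV neither is forced to vanish, so both $f_{12,z_{k-1}}$ and $f_{32,z_{k-1}}$ may be nonzero and the differentiations in $z_{k-1}$ (and $z_{k-2}$) yield richer combinations. The key algebraic identity tying them together is $H=0$, which via $f_{11,z_0}f_{32,z_{k-1}}=f_{31,z_0}f_{12,z_{k-1}}$ and $f_{11}f_{11,z_0}=f_{31}f_{31,z_0}$ should let me replace $f_{32,z_{k-1}}$ by $(f_{31}/f_{11})f_{12,z_{k-1}}$ (on the open set where $f_{11}\neq 0$; the locus $f_{11}=0$ together with $C\neq 0$ forces $f_{31}\neq 0$ and is handled symmetrically, or is already excluded as part of Group III). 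Once that substitution is made the computation should collapse to the same matrix form as in Propositions \ref{Prop2} and \ref{Prop3}, and the determinant $f_{11}^2+\eta^2>0$ (resp.\ $f_{31}^2+\eta^2>0$) finishes the argument. For $k=2$ I would simply invoke the result of \cite{KKT1}, as is done in the other propositions.
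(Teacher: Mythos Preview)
Your template is right in spirit, but there is a concrete error that derails the plan. Differentiating (\ref{SEq2n}) with respect to $z_{k-1}$ does \emph{not} give $f_{22,z_{k-2}}=0$ in Group~IV; it gives
\[
f_{22,z_{k-2}}=f_{11}f_{32,z_{k-1}}-f_{31}f_{12,z_{k-1}}=LF_{z_k}\neq 0,
\]
since $L\neq 0$ and $F_{z_k}\neq 0$. So the very hypothesis you intend to use at the second differentiation step is false, and differentiating the original Codazzi equations (\ref{Eq1-2n}), (\ref{Eq2-2n}) in $z_{k-2}$ will retain the unwanted $-f_{22,z_{k-2}}b_x$ and $-f_{22,z_{k-2}}c_x$ terms. (Also, your substitution has the ratio inverted: from $H=0$ and $f_{11,z_0}f_{32,z_{k-1}}=f_{31,z_0}f_{12,z_{k-1}}$ one gets $f_{32,z_{k-1}}=(f_{11}/f_{31})f_{12,z_{k-1}}$, not $(f_{31}/f_{11})$.)

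The paper's fix is to differentiate a second time with respect to $z_0$, not $z_{k-2}$. After the first differentiation in $z_{k-1}$ and the substitution $f_{32,z_{k-1}}/f_{12,z_{k-1}}=f_{11}/f_{31}$ together with $f_{31}^2-f_{11}^2=C$, the expressions for $a_x,b_x$ collapse to
\[
a_x=(a-c)\,\eta\,\frac{f_{11}}{f_{31}}+2b\,\frac{C}{f_{31}},\qquad
b_x=-(a-c)\,\frac{C}{f_{31}}+2b\,\eta\,\frac{f_{11}}{f_{31}},
\]
whose right-hand sides depend only on $z_0$ (through $f_{11},f_{31}$). Differentiating these in $z_0$ kills the left-hand sides and yields a $2\times 2$ homogeneous system in $(a-c,2b)$ with determinant $\eta^2L^2+C^2f_{31,z_0}^2\neq 0$; here is exactly where $C\neq 0$ and $L\neq 0$ are used. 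This works uniformly for all $k\geq 2$, so no separate appeal to \cite{KKT1} for $k=2$ is needed in this proposition.
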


\begin{proof} When $f_{31}^2 - f_{11}^2 =  C \neq 0$, then $H=0$ and $L\neq 0$. 
We consider 
the structure equations (\ref{SEq1n}), (\ref{SEq2n})  and (\ref{SEq3n}). 
On one hand, subtracting $f_{31}$ times (\ref{SEq3n}) from $f_{11}$ times (\ref{SEq1n}), and taking into account that $H=0$,  leads to 
\begin{equation*}
\sum_{i=0}^{k-1} (f_{11}f_{12,z_i} - f_{31}f_{32,z_i}) z_{i+1} + \eta (f_{11}f_{32} - f_{31}f_{12}) =HF= 0. 
\end{equation*}
Differentiating the latter with respect to $z_{k}$ leads to 
\begin{equation}\label{rel=0}
f_{11}f_{12,z_{k-1}} - f_{31}f_{32,z_{k-1}} = 0. 
\end{equation}
On the other hand,  subtracting $f_{31}$ times (\ref{SEq1n}) from $f_{11}$ times (\ref{SEq3n})  leads to 
%\begin{equation}
%LF= \sum_{i=0}^{k-1} (f_{11}f_{32,z_i} - f_{31}f_{12,z_i}) z_{i+1} +  %\eta(f_{11}f_{12} - f_{31}f_{32}) + (f_{31}^2 - f_{11}^2) f_{22}
%\end{equation}
%and hence 
\begin{equation*}
LF= \sum_{i=0}^{k-1} (f_{11}f_{32,z_i} - f_{31}f_{12,z_i}) z_{i+1} +  \eta(f_{11}f_{12} - f_{31}f_{32}) + Cf_{22} \neq 0.
\end{equation*}
Differentiating the latter with respect to $z_{k}$ leads to 
\begin{equation*}
LF_{z_{k}} = f_{11}f_{32,z_{k-1}} - f_{31}f_{12,z_{k-1}}. 
\end{equation*}
Note that because neither of $L$ and $F_{z_{k}}$ is zero, we have 
\begin{equation}\label{eqf32zk-1}
f_{11}f_{32,z_{k-1}} - f_{31}f_{12,z_{k-1}} \neq 0.
\end{equation}

Observe that since $f_{31}^2-f_{11}^2=C\neq 0$, we conclude that $f_{11}\neq 0$  and $f_{31}\neq 0$.  In fact, otherwise if $f_{11}=0$, then 
$f_{31}^2=C$, which contradicts (\ref{std7}). Similarly, one shows that 
$f_{31}\neq 0$.  
From  (\ref{rel=0}) and (\ref{eqf32zk-1}), we then conclude that  
  $f_{12,z_{k-1}} \neq 0$, and  $f_{32,z_{k-1}} \neq 0$.
Indeed, if $f_{12,z_{k-1}} = 0$ (resp. $f_{32,z_{k-1}} = 0$) then it follows from (\ref{rel=0}) that $f_{32,z_{k-1}}=0$ (resp. $f_{12,z_{k-1}} = 0$)  which  contradicts (\ref{eqf32zk-1}).  
 We conclude then from (\ref{rel=0}) that 
\begin{equation}\label{f1131frac}
\dfrac{f_{11}}{f_{31}} = \dfrac{f_{32,z_{k-1}}}{f_{12, z_{k-1}}}.
\end{equation}

In light of the above analysis, let's consider (\ref{Eq1-2n}) and (\ref{Eq2-2n}). 
%which can be rewritten as
%\begin{eqnarray}\label{Eq1F}
%f_{11}a_t + \eta b_t - f_{12}a_x - f_{22} b_x - 2b(f_{11}f_{32} - %f_{31}f_{12})+ (a-c)(\eta f_{32} - f_{31}f_{22})  = 0\\\label{Eq2F}
%f_{11}b_t + \eta c_t - f_{12}b_x - f_{22} c_x + (a-c) (f_{11}f_{32} - %f_{31}f_{12})+ 2b(\eta f_{32} - f_{31}f_{22})= 0 
%\end{eqnarray} 
Since $k\geq 2$, differentiating (\ref{Eq1-2n}) and (\ref{Eq2-2n}) with respect to $z_{k-1}$, and then dividing by $f_{12,z_{k-1}} \neq 0$, leads to 
\begin{eqnarray*}\label{eq1F1}
a_x = (a-c)\eta \dfrac{f_{32,z_{k-1}}}{f_{12,z_{k-1}}} - 2b \dfrac{f_{11}f_{32,z_{k-1}} - f_{31}f_{12,z_{k-1}}}{f_{12,z_{k-1}}}\\\label{eq2F1}
b_x = (a-c) \dfrac{f_{11}f_{32,z_{k-1}} - f_{31}f_{12,z_{k-1}}}{f_{12,z_{k-1}}} + 2b \eta \dfrac{f_{32,z_{k-1}}}{f_{12,z_{k-1}}}
\end{eqnarray*}
Taking into account  (\ref{f1131frac}) and $f_{31}^2 - f_{11}^2 = C$, these equations reduce to 
\begin{eqnarray*}\label{eq1F2}
a_x = (a-c)\eta\dfrac{f_{11}}{f_{31}} + 2b \dfrac{C}{f_{31}},\\\label{eq2F2*}
b_x = -(a-c)\dfrac{C}{f_{31}} + 2b \eta \dfrac{f_{11}}{f_{31}}. 
\end{eqnarray*}
Differentiating the latter two equations  with respect to $z_0$ leads to 
%\begin{eqnarray}
%(a-c)\eta \dfrac{f_{31}f_{11,z_0} - f_{11}f_{31,z_0}}{f_{31}^2} - 2b %\dfrac{cf_{31,z_0}}{f_{31}^2} = 0\\
%(a-c)\eta  \dfrac{cf_{31,z_0}}{f_{31}^2}+2b \dfrac{f_{31}f_{11,z_0} - %f_{11}f_{31,z_0}}{f_{31}^2}  = 0
%\end{eqnarray}
%and after multiplying by $f_{31}^2 \neq 0$, the latter is equivalent to 
\begin{eqnarray*}
-\eta L (a-c) - C f_{31,z_0} 2b = 0,\\
C f_{31,z_0} (a-c) - \eta L 2b = 0,  
\end{eqnarray*}
which in matrix form become
\begin{equation*}
\left(\begin{array}{cc}-\eta L  & -Cf_{31,z_0} \\Cf_{31,z_0} & -\eta L\end{array}\right)\left(\begin{array}{c}a-c \\2b\end{array}\right) = 0. 
\end{equation*}
Because neither $\eta$, $C$, $f_{31,z_0}$ and $L$ is zero, the determinant $\eta^2 L ^2 + C^2 f_{31,z_0}^2 \neq 0$, and hence $a-c = b= 0$, which contradicts the Gauss equation (\ref{Gauss}). 
Therefore, we conclude that for any $k\geq 2$,  when $f_{31}^2 -f_{11}^2  = C\neq 0$, the system of equations (\ref{Eq1}), (\ref{Eq2}) and  (\ref{Gauss}) is inconsistent. 
\end{proof}

Finally, we have the following similar result for the evolution equations in item V of Remark 1. 

\begin{Prop}\label{prop5} For $k$-th order evolution equations, $k\geq 2$,  describing $\eta$ pseudo-spherical surfaces of type (\ref{evol}), with  associated $1$-forms $\omega^{i} = f_{i1}dx + f_{i2}dt$, $1\leq i\leq 3$, where $HL \neq 0$, the system of equations (\ref{Eq1}), (\ref{Eq2}) and  (\ref{Gauss}) is inconsistent. 
\end{Prop}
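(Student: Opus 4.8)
The plan is to follow the same pattern of order-reduction in the jet variables that worked for Propositions \ref{Prop2}--\ref{prop4}, now under the hypothesis $HL\neq 0$. Recall that in this case both $f_{11}$ and $f_{31}$ are nonzero (since $H=f_{11}f_{11,z_0}-f_{31}f_{31,z_0}$ and $L=f_{11}f_{31,z_0}-f_{31}f_{11,z_0}$ cannot both survive if either vanishes), and $f_{11,z_0}^2+f_{31,z_0}^2\neq 0$ by (\ref{std7}). First I would take the combinations $f_{11}\cdot(\ref{SEq1n})-f_{31}\cdot(\ref{SEq3n})$ and $f_{11}\cdot(\ref{SEq3n})-f_{31}\cdot(\ref{SEq1n})$, which give $HF$ and $LF$ respectively as explicit expressions in the jets; differentiating each with respect to $z_k$ and using $F_{z_k}\neq 0$ together with $H\neq 0$, $L\neq 0$ yields the two relations
\begin{equation*}
f_{11}f_{12,z_{k-1}}-f_{31}f_{32,z_{k-1}}=H F_{z_k}\neq 0,\qquad f_{11}f_{32,z_{k-1}}-f_{31}f_{12,z_{k-1}}=L F_{z_k}\neq 0,
\end{equation*}
and in particular (arguing as in Proposition \ref{prop4}) that $f_{12,z_{k-1}}\neq 0$ and $f_{32,z_{k-1}}\neq 0$.

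Next I would turn to the reduced Codazzi equations (\ref{Eq1-2n}) and (\ref{Eq2-2n}), which hold because of the universality result of \cite{KKT2}. Since $k\geq 2$, differentiating both with respect to $z_{k-1}$ kills all terms not involving $f_{12,z_{k-1}}$ or $f_{32,z_{k-1}}$ (note $f_{22}$ is independent of $z_{k-1}$ by (\ref{std2})), and after dividing by $f_{12,z_{k-1}}$ one obtains expressions for $a_x$ and $b_x$ as linear combinations of $a-c$ and $2b$ with coefficients built from $f_{11},f_{31},\eta$ and the ratio $f_{32,z_{k-1}}/f_{12,z_{k-1}}$. Substituting these back into (\ref{Eq1-2n}), (\ref{Eq2-2n}) removes the $a_x,b_x$ terms. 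Then differentiating the resulting two equations once more with respect to $z_0$ (which is legitimate since $f_{11},f_{31}$ genuinely depend on $z_0$, as $f_{11,z_0}^2+f_{31,z_0}^2\neq 0$) should produce a homogeneous $2\times2$ linear system in $(a-c,2b)$ whose coefficient matrix has the skew form $\bigl(\begin{smallmatrix}P & -Q\\ Q & P\end{smallmatrix}\bigr)$ with $P,Q$ expressed through $H$, $L$, $f_{11,z_0}$, $f_{31,z_0}$, $C$-type quantities and $\eta$; its determinant $P^2+Q^2$ is strictly positive precisely because $HL\neq 0$ forces one of $P,Q$ to be nonzero. This forces $a-c=b=0$, contradicting the Gauss equation (\ref{Gauss}).

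The main obstacle I anticipate is bookkeeping: unlike case IV, here neither $H=f_{11}f_{11,z_0}-f_{31}f_{31,z_0}$ nor $L=f_{11}f_{31,z_0}-f_{31}f_{11,z_0}$ simplifies to a constant, so the coefficients appearing after the $z_{k-1}$-differentiation involve the ratio $f_{32,z_{k-1}}/f_{12,z_{k-1}}$, which is not a priori $f_{11}/f_{31}$ (that identification used $H=0$, which fails here). I would therefore need to keep $f_{32,z_{k-1}}$ and $f_{12,z_{k-1}}$ as independent quantities, carry them through, and only at the very end invoke the two nonvanishing relations displayed above to conclude that the relevant determinant cannot vanish. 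A secondary subtlety is the case $k=2$, where differentiating twice (in $z_{k-1}=z_1$ and then $z_0$) must be checked not to over-differentiate; but since $z_1$ and $z_0$ are distinct jet coordinates this is fine, and as in the other propositions the $k=2$ base case can alternatively be cited from \cite{KKT1}. Once the $2\times2$ system is assembled and its determinant shown to be of the form $P^2+Q^2>0$, the contradiction with $ac-b^2=-1$ is immediate, completing the proof.
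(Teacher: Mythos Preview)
Your outline tracks the paper's argument through the first differentiation in $z_{k-1}$, but the decisive claim---that the resulting $2\times 2$ determinant $P^2+Q^2$ is strictly positive because $HL\neq 0$---is false, and this is precisely where the bulk of the work in this case lies. Concretely: the paper first derives an extra relation you omit, obtained by equating the two expressions for $F$ coming from $HF$ and $LF$ and differentiating in $z_k$; this gives $(f_{31}^2-f_{11}^2)(f_{11,z_0}f_{32,z_{k-1}}-f_{31,z_0}f_{12,z_{k-1}})=0$, hence (when $f_{12,z_{k-1}}\neq 0$) the identification $f_{32,z_{k-1}}/f_{12,z_{k-1}}=f_{31,z_0}/f_{11,z_0}$. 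With this, the expressions for $a_x,b_x$ have coefficients $\eta\, f_{31,z_0}/f_{11,z_0}$ and $L/f_{11,z_0}$, and differentiating in $z_0$ produces a system whose entries are $P=\eta\,(f_{31,z_0}/f_{11,z_0})_{z_0}$ and $Q=(L/f_{11,z_0})_{z_0}$. These can \emph{both} vanish with $HL\neq 0$: they vanish exactly when $f_{31}=\gamma f_{11}+\mu$ for constants $\gamma,\mu$, and then $L=-\mu f_{11,z_0}$ and $H=f_{11,z_0}\bigl((1-\gamma^2)f_{11}-\gamma\mu\bigr)$ are both nonzero for $\mu\neq 0$ and generic $f_{11}$. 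So $HL\neq 0$ does not force the determinant to be positive, and no contradiction arises at this stage.

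The paper then spends most of its proof disposing of this residual affine case. One first shows $f_{22,z_{k-2}}\neq 0$ (from (\ref{SEq2n}) differentiated in $z_{k-1}$, using $f_{11}f_{32,z_{k-1}}-f_{31}f_{12,z_{k-1}}=LF_{z_k}\neq 0$), deduces $f_{32}=\gamma f_{12}+\nu$ with $\nu$ depending on $z_0,\dots,z_{k-2}$, substitutes everything back into the full Codazzi equations, and differentiates in $z_{k-2}$. A polynomial-in-$f_{11}$ argument then forces $\gamma=0$ and $c_x+2b\mu=0$; a separate analysis of the $t$-dependence shows $a_t=b_t=c_t=0$, whence $\nu=0$, so $f_{31}=\mu$ constant and $f_{32}=0$; finally the identity (\ref{eqFmF}) collapses to $(\mu^2-f_{11}^2)(\eta f_{12}-f_{11}f_{22})=0$, which contradicts $f_{12,z_{k-1}}\neq 0$. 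Your plan contains none of this, so as written it does not establish the proposition. (A smaller gap: your claim that $f_{12,z_{k-1}}\neq 0$ ``arguing as in Proposition~\ref{prop4}'' does not carry over, since that argument used $f_{11}f_{12,z_{k-1}}-f_{31}f_{32,z_{k-1}}=0$, which is nonzero here; the paper treats $f_{12,z_{k-1}}=0$ as a separate, easy subcase.)
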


\begin{proof} We consider the  structure equations (\ref{SEq1n}), (\ref{SEq2n})  and (\ref{SEq3n} ). We are 
assuming that $HL \neq 0$, where $H= f_{11}f_{11,z_0} - f_{31}f_{31,z_0}$ and $L = f_{11}f_{31,z_0} - f_{31}f_{11,z_0}$, hence $f_{11}\neq 0$ and 
$f_{31}\neq 0$. Subtracting $f_{31}$ times (\ref{SEq3n}) from $f_{11}$ times (\ref{SEq1n}) leads to 
\begin{eqnarray}\label{HF}
HF = \sum_{i=0}^{k-1} (f_{11}f_{12,z_i} - f_{31}f_{32,z_i}) z_{i+1} + \eta (f_{11}f_{32} - f_{31}f_{12}), 
\end{eqnarray}
while subtracting $f_{31}$ times (\ref{SEq1n}) from $f_{11}$ times (\ref{SEq3n}) leads to 
\begin{eqnarray}\label{LF}
LF = \sum_{i=0}^{k-1} (f_{11}f_{32,z_i} - f_{31}f_{12,z_i}) z_{i+1} + (f_{31}^2 - f_{11}^2)f_{22}  + \eta (f_{11}f_{12} - f_{31}f_{32}). 
\end{eqnarray}
Differentiating (\ref{HF}) and (\ref{LF}) with respect to $z_{k}$ leads to 
\begin{eqnarray*}
f_{11}f_{12,z_{k-1}} - f_{31}f_{32,z_{k-1}} = HF_{z_k},\\
f_{11}f_{32,z_{k-1}} - f_{31}f_{12,z_{k-1}} = LF_{z_k},
\end{eqnarray*}
and since neither of $H$, $L$ and $F_{z_k}$ is zero, we have 
\begin{eqnarray}\label{neq1}
f_{11}f_{12,z_{k-1}} - f_{31}f_{32,z_{k-1}} \neq 0, \\\label{neq2}
f_{11}f_{32,z_{k-1}} - f_{31}f_{12,z_{k-1}} \neq 0.
\end{eqnarray}

Since $k\geq 2$, differentiating (\ref{SEq2n}) with respect to $z_{k-1}$ leads to 
\begin{equation}\label{101}
f_{11}f_{32,z_{k-1}} - f_{31}f_{12,z_{k-1}} = f_{22,z_{k-2}}, 
\end{equation}
and hence
\begin{equation}\label{102}
f_{22,z_{k-2}} \neq 0. 
\end{equation}
Note that when $HL\neq 0$, it follows from (\ref{HF}) and (\ref{LF})  that the expression of $F$ can be written in two equivalent ways: 
\begin{eqnarray*}\label{F1}
F = \sum_{i=0}^{k-1} \dfrac{f_{11}f_{12,z_i} - f_{31}f_{32,z_i}}{H} z_{i+1} + \eta \dfrac{f_{11}f_{32} - f_{31}f_{12}}{H},\\\label{F2}
F =  \sum_{i=0}^{k-1} \dfrac{f_{11}f_{32,z_i} - f_{31}f_{12,z_i}}{L} z_{i+1}  + \dfrac{(f_{31}^2 - f_{11}^2)f_{22}}{L} + \eta \dfrac{f_{11}f_{12} - f_{31}f_{32}}{L}. 
\end{eqnarray*}
Subtracting the last two equations leads then to 
\begin{equation}\label{eqFmF}
\begin{split}
\sum_{i=0}^{k-1} \bigg(\dfrac{f_{11}f_{12,z_i} - f_{31}f_{32,z_i}}{H} -  \dfrac{f_{11}f_{32,z_i} - f_{31}f_{12,z_i}}{L}  \bigg)z_{i+1} \\ + \eta \bigg(\dfrac{f_{11}f_{32}-f_{31}f_{12}}{H}  - \dfrac{f_{11}f_{12} - f_{31}f_{32}}{L}\bigg) - \dfrac{(f_{31}^2 - f_{11}^2)f_{22}}{L} = 0. 
\end{split}
\end{equation}
Differentiating  this equation with respect to $z_{k}$ leads to 
%\begin{equation}
%\dfrac{f_{11}f_{12,z_{k-1}} - f_{31}f_{32,z_{k-1}}}{H} -  %\dfrac{f_{11}f_{32,z_{k-1}} - f_{31}f_{12,z_{k-1}}}{L}  = 0
%\end{equation}
%or equivalently to 
\begin{equation*}
L(f_{11}f_{12,z_{k-1}} - f_{31}f_{32,z_{k-1}}) -  H(f_{11}f_{32,z_{k-1}} - f_{31}f_{12,z_{k-1}}) = 0.
\end{equation*}
Substituting the expressions of  $H$ and $L$ in the latter, and after simplifying the expression, leads to 
\begin{equation*}
(f_{31}^2 - f_{11}^2) ( f_{11,z_0} f_{32,z_{k-1}} - f_{31,z_0} f_{12,z_{k-1}}) = 0. 
\end{equation*}
Note that $f_{31}^2 - f_{11}^2$ is not a constant, otherwise $H=0$. We conclude then that 
\begin{equation}\label{eqf}
f_{11,z_0} f_{32,z_{k-1}} - f_{31,z_0} f_{12,z_{k-1}} = 0. 
\end{equation}

Note also that $f_{11,z_0}$ and $f_{31,z_0}$ cannot vanish simultaneously, 
 otherwise (\ref{std7}) is not satisfied. Moreover, $f_{12,z_{k-1}}$ and $f_{32,z_{k-1}}$ cannot simultaneously  vanish, otherwise $F$ is not a $k$-th order evolution equation. Moreover, (\ref{SEq1n}) and (\ref{SEq3n}) imply that $f_{11,z_0}\neq 0$ if, and only if, $f_{12,z_{k-1}}\neq 0$ and 
 $f_{31,z_0}\neq 0$ if, and only if,  
$f_{32,z_{k-1}}\neq 0$. Therefore, 
  if $f_{11,z_0}=0$  then  $f_{12,z_{k-1}}=0$,  
 $f_{31,z_0}\neq 0$ and $f_{32,z_{k-1}}\neq 0$. 
 Similarly, if $f_{31,z_0}=0$, then  $f_{32,z_{k-1}}=0$,  
 $f_{11,z_0}\neq 0$ and $f_{31,z_{k-1}}\neq 0$.

We now consider equations (\ref{Eq1-2n}) and (\ref{Eq2-2n}).
 Differentiating these equations with respect to $z_{k-1}$ leads to
\begin{eqnarray}\label{eq1final}
f_{12,z_{k-1}} a_x = (a-c) \eta f_{32,z_{k-1}} - 2b(f_{11} f_{32,z_{k-1}} - f_{31}f_{12, z_{k-1}}), \\\label{eq2final}
f_{12,z_{k-1}} b_x = (a-c)(f_{11} f_{32,z_{k-1}} - f_{31}f_{12, z_{k-1}}) + 2b \eta f_{32,z_{k-1}}. 
\end{eqnarray}

If $f_{12, z_{k-1}} = 0$, then  $f_{32,z_{k-1}} \neq 0$, and the system of equations (\ref{eq1final}) and (\ref{eq2final}) becomes
\begin{equation*}
\left(\begin{array}{cc}
\eta & -f_{11} \\
f_{11} & \eta \end{array}\right)
\left(\begin{array}{c}
a-c \\2b\end{array}\right) = 0.
\end{equation*}
The determinant $\eta^2 + f_{11}^2 \neq 0$, and hence $a-c = b = 0$, which runs into a contradiction with the Gauss equation (\ref{Gauss}). 

If $f_{12, z_{k-1}} \neq 0$, then the system of equations (\ref{eq1final}) and (\ref{eq2final}) can be rewritten as follows:
\begin{eqnarray}\label{eq1final2}
a_x = (a-c)\eta \dfrac{f_{32,z_{k-1}}}{f_{12, z_{k-1}}} - 2b \dfrac{f_{11}f_{32,z_{k-1}} - f_{31}f_{12,z_{k-1}}}{f_{12, z_{k-1}}},\\\label{eq2final2}
b_x = (a-c)\dfrac{f_{11}f_{32,z_{k-1}} - f_{31}f_{12,z_{k-1}}}{f_{12, z_{k-1}}} + 2b \eta \dfrac{f_{32,z_{k-1}}}{f_{12, z_{k-1}}}.  
\end{eqnarray}
From (\ref{eqf}) and the assumption $f_{12, z_{k-1}} \neq 0$, we have 
\begin{equation*}
\dfrac{f_{32, z_{k-1}}}{f_{12,z_{k-1}}} = \dfrac{f_{31,z_0}}{f_{11,z_0}}. 
\end{equation*}
Substituting the latter in  (\ref{eq1final2}) and (\ref{eq2final2}) leads to 
\begin{eqnarray}\label{eq1final3}
a_x = (a-c)\eta \dfrac{f_{31,z_0}}{f_{11,z_0}} - 2b \dfrac{L}{f_{11,z_0}},\\\label{eq2final3}
b_x = (a-c)\dfrac{L}{f_{11,z_0}} + 2b \eta \dfrac{f_{31,z_0}}{f_{11,z_0}}.
\end{eqnarray}
Differentiating the latter two equations with respect to $z_0$ leads to 
\begin{equation*}
\left(\begin{array}{cc}\eta (f_{31,z_0}/f_{11,z_0})_{z_0}& -(L/f_{11,z_0})_{z_0} \\ (L/f_{11,z_0})_{z_0}& \eta (f_{31,z_0}/f_{11,z_0})_{z_0}\end{array}\right)\left(\begin{array}{c}a-c \\2b\end{array}\right) =0.
\end{equation*}
The determinant $\eta^2 (f_{31,z_0}/f_{11,z_0})_{z_0}^2 + (L/f_{11,z_0})_{z_0}^2=0$. Otherwise, $a-c = b = 0$, which runs into a contradiction with the Gauss equation (\ref{Gauss}). Threfore, $ (f_{31,z_0}/f_{11,z_0})_{z_0} =0$, and $(L/f_{11,z_0})_{z_0} = 0$. Note that the vanishing of  $(f_{31,z_0}/f_{11,z_0})_{z_0}$ means that 
\begin{equation}\label{f31}
f_{31} = \gamma f_{11} + \mu,
\end{equation}
where $\gamma$ and $\mu$ are constants. We have then
\begin{equation*}
\dfrac{f_{32,z_{k-1}}}{f_{12,z_{k-1}}} = \dfrac{f_{31,z_0}}{f_{11,z_0}} = \gamma
\end{equation*}
and hence
\begin{equation}\label{f32}
f_{32} = \lambda f_{12} + \nu,
\end{equation}
where $\nu$ depends on $z_0,..., z_{k-2}$. Note that (\ref{neq2}) and $f_{12,z_{k-1}} \neq 0$ means that $\mu \neq 0$. This fact can also be obtained from the non-vanishing of 
\begin{equation}\label{Lmu}
L = -\mu f_{11,z_0} \neq 0.
\end{equation}
 In light of (\ref{f31}), (\ref{f32}) and (\ref{Lmu}), equations (\ref{eq1final3}) and (\ref{eq2final3}) become
\begin{eqnarray*}\label{eq1f4}
a_{x} = (a-c)\eta \gamma + 2b\mu,\\\label{eq2f4}
b_x = -(a-c) \mu + 2b\eta \gamma.
\end{eqnarray*}
We have also 
\begin{eqnarray*}\label{eq1final4}
f_{11}f_{32} - f_{31}f_{12} = \nu f_{11} - \mu f_{12},\\\label{eq2final4}
\eta f_{32} - f_{31}f_{22} = \eta \gamma f_{12} + \eta \nu - \gamma f_{11} f_{22} - \mu f_{22}.  
\end{eqnarray*}
Substituting 
%(\ref{eq1f4}), (\ref{eq2f4}),  (\ref{eq1final4}) and (\ref{eq2final4}) 
the last four equations 
in (\ref{Eq1-2n}) and (\ref{Eq2-2n}) leads to 
\begin{eqnarray}\label{eq1N2}
f_{11} a_t + \eta b_t - (a-c)(\gamma f_{11}f_{22} -\eta\nu) - 2b(\eta \gamma f_{22} + \nu f_{11}) = 0,\\\label{eq2N2}
f_{11}b_t + \eta c_t - f_{22}c_x + (a-c)\nu f_{11} + 2b(\eta \nu - \gamma  f_{11} f_{22} - \mu f_{22}) = 0.
\end{eqnarray}

If $k\geq 3$, differentiating (\ref{eq1N2}) and (\ref{eq2N2}) with respect to $z_{k-2}$, and then dividing by $f_{22,z{_k-2}} \neq 0$, leads to 
\begin{eqnarray}\label{eq1final5}
(-2bf_{11}+(a-c)\eta)\frac{\nu_{,z_{k-2}}}{f_{22,z_{k-2}}} = 
\gamma( (a-c)f_{11}+2b\eta), \\ \label{eq2final5}
((a-c)f_{11}+2b\eta)\frac{\nu_{,z_{k-2}}}{f_{22,z_{k-2}}} =
2b\gamma f_{11} +c_x+2b\mu ).
\end{eqnarray}
Observe that $-2bf_{11}+(a-c)\eta$ and $(a-c)f_{11}+2b\eta$ cannot vanish simultaneously, since $f_{11,z_0}\neq 0$ and $(a-c)^2+b^2\neq 0$. Therefore, from (\ref{eq1final5}) and (\ref{eq2final5})
we get 
\[
\gamma [(a-c)^2+4b^2]f_{11}^2+[2b\eta\gamma(a-c)+2b(c_x+2b\mu)]f_{11} +
4\gamma b^2\eta^2-\eta(a-c)(c_x+2b\mu)=0. 
\]
Differentiating  twice with respect to $z_0$ leads to 
$\gamma=0$ and $c_x+2b\mu=0$. Hence, (\ref{eq1N2}) and (\ref{eq2N2}) reduce to 
\begin{eqnarray} \label{*1}
f_{11}b_t+\eta c_t + [(a-c)f_{11}+2b\eta]\nu=0,\\ \label{*2}
f_{11}a_t+\eta b_t +[-2bf_{11}+(a-c)\eta]\nu=0.
\end{eqnarray}
It follows from these equations that $\nu$ depends at most on $z_0$ and 
\[
-[2b b_t+(a-c)a_t]f_{11}^2-2b\eta (c_t+a_t) f_{11}+\eta^2[(a-c)c_t-2bb_t]=0.
\]
Therefore \quad $2b b_t+(a-c)a_t=b(c_t+a_t)= (a-c)c_t-2bb_t=0$. \quad 
If $b=0$, then $a_t=0$ and hence $c_t$=0.  If $b\neq 0$, then $c_t+a_t=0$ 
and using the derivative of the Gauss equation (\ref{Gauss}), we get that 
$(a-c)c_t-2bb_t=-2a_t(a-c)=0$. If either $a_t=0$ or $a-c=0$,
we get $a_t=c_t=0$. Hence, for any $b$, we have that $a$ and $c$ do not depend on $t$. It follows from the Gauss 
equation that  $b$ also does not depend on $t$. Therefore (\ref{eq1N2}) and 
(\ref{eq2N2}) reduce to 
\[ 
\left(
\begin{array}{cc}
\eta\nu &-f_{11}\nu\\
f_{11}\nu & \eta \nu
\end{array}
\right)
\left(
\begin{array}{c}
a-c)\\
2b
\end{array}
\right)=0.
\]
Since $a-c$ and $b$ cannot vanish simultaneously, the determinant 
$\nu^2(\eta^2+f_{11}^2)=0$. hence $\nu=0$. 

The above analysis implies that $f_{31}=\mu\neq 0$, $f_{32}=0$, 
$H=f_{11}f_{11,z_0}$ and $L=-\mu f_{11,z_0}$. Therefore, (\ref{eqFmF})
reduces to 
\[
 (\mu^2 -f_{11}^2)(\eta f_{12}-f_{22}f_{11})=0.
 \]
Differentiating with respect to $z_{k-1}$, we get a contradiction 
since $(\mu^2-f_{11}^2)f_{12,z_{k-1}}\neq 0$. 

\vspace{.1in}

If $k=2$, the proof of this proposition was given in \cite{KKT1}. 
Finally, we conclude that for any $k\geq 2$,  whenever $HL \neq 0$, the system of equations (\ref{Eq1}), (\ref{Eq2}) and  (\ref{Gauss}) is inconsistent. 
\end{proof}

%\bibliographystyle{amsalpha}
%\bibliography{Biblio}

\vspace{.35in}

\noindent

\address{Department of Mathematics, Northeastern Illinois University\\
Chicago, IL 60625-4699, USA\\
\email{n-kahouadji@neiu.edu}}

\address{Department of Mathematics and Statistics, McGill University\\
Montreal, Quebec, H3A 0B9, Canada\\
\email{nkamran@math.mcgill.ca}}

\address{Department of Mathematics, Universidade de Bras\'\i lia\\
Bras\'\i lia -- DF, 70910-900, Brazil\\ 
\email{K.Tenenblat@mat.unb.br}}
\end{document}